\documentclass[11pt]{amsart} 
\usepackage{amsthm,amsbsy,amsmath,amssymb,amscd,amsfonts,array,mathrsfs,verbatim,enumerate,xypic,enumitem}
\usepackage[all]{xy}
\xyoption{arc} 

\setlength{\oddsidemargin}{0.5in}        \setlength{\evensidemargin}{0.0cm}
\setlength{\marginparsep}{2mm}          \setlength{\marginparwidth}{.2cm}
\setlength{\textwidth}{15cm}            \setlength{\topmargin}{0.0cm}
\setlength{\textheight}{22cm}           \setlength{\headheight}{.1in}
\setlength{\headsep}{.3in}              
\setlength{\parskip}{2.mm}              

\newtheorem{thm}{Theorem}
\newtheorem{prop}[thm]{Proposition}     
\newtheorem{lem}[thm]{Lemma}
\newtheorem{cor}[thm]{Corollary}

\theoremstyle{definition}

\newtheorem{defn}{Definition}

\newtheorem{example}{Example} 
\newtheorem{rem}{Remark}

\DeclareFontFamily{OT1}{rsfs}{}
\DeclareFontShape{OT1}{rsfs}{n}{it}{<-> rsfs10}{}
\DeclareMathAlphabet{\curly}{OT1}{rsfs}{n}{it}

\newcommand{\C}{{\bf C}} 
\newcommand{\D}{{\bf D}} 
\newcommand{\Ouv}{{\bf Ouv}} 
\newcommand{\RS}{{\bf RS}} 
\newcommand{\LRS}{{\bf LRS}} 
\newcommand{\Set}{{\bf Set}} 
\newcommand{\An}{{\bf Rings}} 
\newcommand{\Mon}{{\bf Mon}} 
\newcommand{\Alg}{{\bf Alg}} 
\newcommand{\LAn}{{\bf LAn}}   
\newcommand{\MS}{{\bf MS}}   
\newcommand{\LMS}{{\bf LMS}}   
\newcommand{\Sch}{{\bf Sch}} 
\newcommand{\PRS}{{\bf PRS}} 
\newcommand{\PS}{{\bf PS}} 
\newcommand{\Top}{{\bf Top}} 
\newcommand{\Sec}{{\bf Sec} \, } 

\newcommand{\m}{\mathfrak{m}}         
\newcommand{\n}{\mathfrak{n}}         
\newcommand{\p}{\mathfrak{p}}  

\newcommand{\ZZ}{\mathbb{Z}} 
\newcommand{\RR}{\mathbb{R}} 
\newcommand{\CC}{\mathbb{C}} 
\newcommand{\NN}{\mathbb{N}} 
\newcommand{\FF}{\mathbb{F}} 

\newcommand{\F}{\curly F}
\newcommand{\M}{\mathcal M}
\newcommand{\T}{\mathcal T}
\renewcommand{\O}{\mathcal O} 
\renewcommand{\P}{\mathcal P}

\renewcommand{\u}{\underline}

\newcommand{\ov}{\overline}
\newcommand{\into}{\hookrightarrow}
\newcommand{\be}{\begin{eqnarray*}}
\newcommand{\ee}{\end{eqnarray*}}
\newcommand{\bne}[1]{\begin{eqnarray} \label{#1} }
\newcommand{\ene}{\end{eqnarray}}
\newcommand{\xym}{\xymatrix}
\newcommand{\bp}{\begin{pmatrix}}
\newcommand{\ep}{\end{pmatrix}}
\newcommand{\slot}{ \hspace{0.05in} {\rm \_} \hspace{0.05in} } 

\newcommand{\loc}{\operatorname{loc}}
\newcommand{\Hom}{\operatorname{Hom}}

\newcommand{\U}{\operatorname{U}}



\newcommand{\Id}{\operatorname{Id}}
\newcommand{\Spec}{\operatorname{Spec}}



\begin{document}

\author{W.~D.~Gillam}
\address{Department of Mathematics, Brown University, Providence, RI 02912}
\email{wgillam@math.brown.edu}
\date{\today}
\title{Localization of ringed spaces}

\begin{abstract} Let $X$ be a ringed space together with the data $M$ of a set $M_x$ of prime ideals of $\O_{X,x}$ for each point $x \in X$.  We introduce the localization of $(X,M)$, which is a locally ringed space $Y$ and a map of ringed spaces $Y \to X$ enjoying a universal property similar to the localization of a ring at a prime ideal.  We use this to prove that the category of locally ringed spaces has all inverse limits, to compare them to the inverse limit in ringed spaces, and to construct a very general $\Spec$ functor.  We conclude with a discussion of relative schemes. \end{abstract}

\maketitle

\setcounter{section}{0}

\section{Introduction} Let $\Top$, $\LRS$, $\RS$, and $\Sch$ denote the categories of topological spaces, locally ringed spaces, ringed spaces, and schemes, respectively.  Consider maps of schemes $f_i : X_i \to Y$ ($i=1,2$) and their fibered product $X_1 \times_Y X_2$ as schemes.  Let $\u{X}$ denote the topological space underlying a scheme $X$.  There is a natural comparison map $$\eta : \u{X_1 \times_Y X_2} \to \u{X}_1 \times_{\u{Y}} \u{X}_2 $$ which is not generally an isomorphism, even if $X_1,X_2,Y$ are spectra of fields (e.g.\ if $Y=\Spec \RR$, $X_1=X_2=\Spec \CC$, the map $\eta$ is two points mapping to one point).  However, in some sense $\eta$ fails to be an isomorphism only to the extent to which it failed in the case of spectra of fields: According to [EGA I.3.4.7] the fiber $\eta^{-1}(x_1,x_2)$ over a point $(x_1,x_2) \in \u{X}_1 \times_{\u{Y}} \u{X}_2$ (with common image $y = f_1(x_1)=f_2(x_2)$) is naturally bijective with the set $$\u{ \Spec } \, k(x_1) \otimes_{k(y)} k(x_2)  . $$  In fact, one can show that this bijection is a homeomorphism when $\eta^{-1}(x_1,x_2)$ is given the topology it inherits from $X_1 \times_Y X_2$. One can even describe the sheaf of rings $\eta^{-1}(x_1,x_2)$ inherits from $X_1 \times_Y X_2$ as follows: Let \be S(x_1,x_2) & := & \{ z \in \Spec  \O_{X_1,x_1} \otimes_{\O_{Y,y}} \O_{X_2,x_2}  : z|\O_{X_i,x_i} = \m_{x_i} {\rm \; for \; } i=1,2 \} . \ee  Then ($\u{\Spec}$ of) the natural surjection $$  \O_{X_1,x_1} \otimes_{\O_{Y,y}} \O_{X_2,x_2} \to k(x_1) \otimes_{k(y)} k(x_2) $$ identifies $\u{\Spec} \, k(x_1) \otimes_{k(y)} k(x_2)$ with a closed subspace of $\u{\Spec} \, \O_{X_1,x_1} \otimes_{\O_{Y,y}} \O_{X_2,x_2}$ and $\O_{X_1 \times_Y X_2} | \eta^{-1}(x_1,x_2) $ naturally coincides, under the EGA isomorphism, to the restriction of the structure sheaf of $\Spec \O_{X_1,x_1} \otimes_{\O_{Y,y}} \O_{X_2,x_2}$ to the closed subspace $$\u{\Spec} \, k(x_1) \otimes_{k(y)} k(x_2) \subseteq \u{\Spec} \, \O_{X_1,x_1} \otimes_{\O_{Y,y}} \O_{X_2,x_2}.\footnote{There is no sense in which this sheaf of rings on $\u{\Spec} \, k(x_1) \otimes_{k(y)} k(x_2)$ is ``quasi-coherent".  It isn't even a module over the usual structure sheaf of $\Spec k(x_1) \otimes_{k(y)} k(x_2)$.}$$   It is perhaps less well-known that this entire discussion remains true for $\LRS$ morphisms $f_1,f_2$.

From the discussion above, we see that it is possible to describe $X_1 \times_Y X_2$, at least as a set, from the following data: \begin{enumerate} \item the ringed space fibered product $X_1 \times_Y^{\RS} X_2$ (which carries the data of the rings $\O_{X_1,x_1} \otimes_{\O_{Y,y}} \O_{X_2,x_2}$ as stalks of its structure sheaf) and \item the subsets $S(x_1,x_2) \subseteq \Spec \O_{X_1,x_1} \otimes_{\O_{Y,y}} \O_{X_2,x_2}$ \end{enumerate}  It turns out that one can actually recover $X_1 \times_Y X_2$ as a scheme solely from this data, as follows:  Given a pair $(X,M)$ consisting of a ringed space $X$ and a subset $M_x \subseteq \Spec \O_{X,x}$ for each $x \in X$, one can construct a locally ringed space $(X,M)^{\loc}$ with a map of ringed spaces $(X,M)^{\loc} \to X$.  In a special case, this construction coincides with M.~Hakim's spectrum of a ringed topos.  Performing this general construction to $$(X_1 \times_Y^{\RS} X_2, \{ S(x_1,x_2) \} ) $$ yields the comparison map $\eta$, and, in particular, the scheme $X_1 \times_Y X_2$.  A similar construction in fact yields all inverse limits in $\LRS$ (\S\ref{section:inverselimits}) and the comparison map to the inverse limit in $\RS$, and allows one to easily prove that a finite inverse limits of schemes, taken in $\LRS$, is a scheme (Theorem~\ref{thm:Schinverselimits}).  Using this description of the comparison map $\eta$ one can easily describe some circumstances under which it is an isomorphism (\S\ref{section:fiberedproducts}), and one can easily see, for example, that it is a \emph{localization morphism} (Definition~\ref{defn:localizationmorphism}), hence has zero cotangent complex.

The localization construction also allows us construct (\S\ref{section:relativespec}), for any $X \in \LRS$, a very general relative spec functor \be \Spec_X : (\O_X-\Alg)^{\rm op} & \to & \LRS / X \ee which coincides with the usual one when $X$ is a scheme and we restrict to quasi-coherent $\O_X$ algebras.  We can also construct (\S\ref{section:geometricrealization}) a ``good geometric realization" functor from M.~Hakim's stack of relative schemes over a locally ringed space $X$ to $\LRS / X$.\footnote{Hakim already constructed such a functor, but ours is different from hers.}  It should be emphasized at this point that there is essentially only \emph{one} construction, the localization of a ringed space of \S\ref{section:localization}, in this paper, and \emph{one} (fairly easy) theorem (Theorem~\ref{thm:localization}) about it; everything else follows formally from general nonsense.

Despite all these results about inverse limits, I stumbled upon this construction while studying \emph{direct limits}.  I was interested in comparing the quotient of, say, a finite \'etale groupoid in schemes, taken in sheaves on the \'etale site, with the same quotient taken in $\LRS$.  In order to compare these meaningfully, one must somehow put them in the same category.  An appealing way to do this is to prove that the (functor of points of the) $\LRS$ quotient is a sheaf on the \'etale site.  In fact, one can prove that for any $X \in \LRS$, the presheaf $$Y \mapsto \Hom_{\LRS}(Y,X) $$ is a sheaf on schemes in both the fppf and fpqc topologies.  Indeed, one can easily describe a topology on $\RS$, analogous to the fppf and fpqc topologies on schemes, and prove it is subcanonical.  To upgrade this to a subcanonical topology on $\LRS$ one is naturally confronted with the comparison of fibered products in $\LRS$ and $\RS$.  In particular, one is confronted with the question of whether $\eta$ is an epimorphism in the category of ringed spaces.  I do not know whether this is true for arbitrary $\LRS$ morphisms $f_1,f_2$, but in the case of schemes it is possible to prove a result along these lines which is sufficient to upgrade descent theorems for $\RS$ to descent theorems for $\Sch$. 

\noindent {\bf Acknowledgements.}  This research was partially supported by an NSF Postdoctoral Fellowship.

\section{Localization} \label{section:mainconstruction}  We will begin the localization construction after making a few definitions.  

\begin{defn} \label{defn:localizationmorphism} A morphism $f : A \to B$ of sheaves of rings on a space $X$ is called a \emph{localization morphism}\footnote{See [Ill II.2.3.2] and the reference therein.} iff there is a multiplicative subsheaf $S \subseteq A$ so that $f$ is isomorphic to the localization $A \to S^{-1}A$ of $A$ at $S$.\footnote{This condition can be checked in stalks.}  A morphism of ringed spaces $f : X \to Y$ is called a localization morphism iff $f^{\sharp} : f^{-1} \O_Y \to \O_X$ is a localization morphism.  \end{defn}

A localization morphism $A \to B$ in $\An(X)$ is both flat and an epimorphism in $\An(X)$.\footnote{Both of these conditions can be checked at stalks.}  In particular, the cotangent complex (hence also the sheaf of K\"ahler differentials) of a localization morphism is zero [Ill II.2.3.2].  The basic example is: For any affine scheme $X = \Spec A$, $\underline{A}_X \to \O_X$ is a localization morphism.

\begin{defn} \label{defn:Spec} Let $A$ be a ring, $S \subseteq \Spec A$ any subset.  We write $\Spec_A S$ for the locally ringed space whose underlying topological space is $S$ with the topology it inherits from $\Spec A$ and whose sheaf of rings is the inverse image of the structure sheaf of $\Spec A$. \end{defn}

If $A$ is clear from context, we drop the subscript and simply write $\Spec S$.  There is one possible point of confusion here: If $I \subseteq A$ is an ideal, and we think of $\Spec A/I$ as a subset of $\Spec A$, then \be \Spec_A (\Spec A/I) & \neq & \Spec A/I  \ee (though they have the same topological space).

\subsection{Prime systems} 

\begin{defn} \label{defn:PRS} Let $X=(X,\O_X)$ be a ringed space.  A \emph{prime system} $M$ on $X$ is a map $x \mapsto M_x$ assigning a subset $M_x \subseteq \Spec \O_{X,x}$ to each point $x \in X$.  For prime systems $M,N$ on $X$ we write $M \subseteq N$ to mean $M_x \subseteq N_x$ for all $x \in X$.  Prime systems on $X$ form a category $\PS(X)$ where there is a unique morphism from $M$ to $N$ iff $M \subseteq N$.  The \emph{intersection} $\cap_i M_i$ of prime systems $M_i \in \PS(X)$ is defined by \be ( \cap_i M_i)_x & := & \cap_i (M_i)_x. \ee A \emph{primed ringed space} $(X,M)$ is a ringed space $X$ equipped with a prime system $M$.  Prime ringed spaces form a category $\PRS$ where a morphism $f : (X,M) \to (Y,N)$ is a morphism of ringed spaces $f$ satisfying $$ (\Spec f_x)(M_x) \subseteq N_{f(x)} $$ for every $x \in X$. \end{defn}

The inverse limit of a functor $i \mapsto M_i$ to $\PS(X)$ is clearly given by $\cap_i M_i$.

\begin{rem} \label{rem:inverseimage} Suppose $(Y,N) \in \PRS$ and $f : X \to Y$ is an $\RS$ morphism.  The \emph{inverse image}  $f^* N$ is the prime system on $X$ defined by \be (f^*N)_x & := & (\Spec f_x)^{-1}(N_{f(x)}) \\ & = & \{ \p \in \Spec \O_{X,x} : f_x^{-1}(\p) \in N_{f(x)} \}. \ee  Formation of inverse image prime systems enjoys the expected naturality in $f$: $g^*(f^*M) = (fg)^*M$.  We can alternatively define a $\PRS$ morphism $f : (X,M) \to (Y,N)$ to be an $\RS$ morphism $f : X \to Y$ such that $M \subseteq f^* N$ (i.e.\ together with a $\PS(X)$ morphism $M \to f^* N$). \end{rem}

For $X \in \LRS$, the \emph{local prime system} $\M_X$ on $X$ is defined by $\M_{X,x} :=  \{ \m_x \}$.  If $Y$ is another locally ringed space, then a morphism $f : X \to Y$ in $\RS$ defines a morphism of primed ringed spaces $f : (X,\M_X) \to (Y,\M_Y)$ iff $f$ is a morphism in $\LRS$, so we have a fully faithful functor \bne{M}  \M : \LRS & \to & \PRS \\ \nonumber X & \mapsto & (X, \M_X), \ene and we may regard $\LRS$ as a full subcategory of $\PRS$. 

At the ``opposite extreme" we also have, for any $X \in \RS$, the \emph{terminal prime system} $\T_X$ defined by $\T_{X,x} := \Spec \O_{X,x}$ (i.e.\ the terminal object in $\PS(X)$).  For $(Y,M) \in \PRS$, we clearly have \be \Hom_{\PRS}((Y,M), (X,\T_X)) & = & \Hom_{\RS}(Y,X), \ee  so the functor \bne{T} \T : \RS & \to & \PRS \\ \nonumber X & \mapsto & (X,\T_X) \ene is right adjoint to the forgetful functor $\PRS \to \RS$ given by $(X,M) \mapsto X$.

\subsection{Localization} \label{section:localization} Now we begin the main construction of this section.  Let $(X,M)$ be a primed ringed space.  We now construct a locally ringed space $(X,M)^{\loc}$ (written $X^{\loc}$ if $M$ is clear from context), and a $\PRS$ morphism $\pi : (X^{\loc}, \M_{X^{\loc}}) \to (X,M)$ called the \emph{localization of} $X$ \emph{at} $M$. 

\begin{defn} Let $X$ be a topological space, $\F$ a sheaf on $X$.  The category $\Sec \F$ of \emph{local sections} of $\F$ is the category whose objects are pairs $(U,s)$ where $U$ is an open subset of $X$ and $s \in \F(U)$, and where there is a unique morphism $(U,s) \to (V,t)$ iff $U \subseteq V$ and $t|_U = s$. \end{defn}

As a set, the topological space $X^{\loc}$ will be the set of pairs $(x,z)$, where $x \in X$ and $z \in M_x$.  Let $\P(X^{\loc})$ denote the category of subsets of $X^{\loc}$ whose morphisms are inclusions.  For $(U,s) \in \Sec \O_X$, set \be \U(U,s) & := & \{ (x,z) \in X^{\loc} : x \in U, s_x \notin z \} . \ee  This defines a functor \be \U : \Sec \O_X & \to & \P(X^{\loc}) \ee satisfying: \be \U(U,s) \cap \U(V,t) & = & \U(U \cap V, s|_{U \cap V} t|_{U \cap V}) \\ \U(U,s^n) & = & \U(U,s) \quad \quad \quad \quad (n \in \ZZ_{>0}). \ee  The first formula implies that $\U(\Sec \O_X) \subseteq \P(X^{\loc})$ is a basis for a topology on $X^{\loc}$ where a basic open neighborhood of $(x,z)$ is a set $\U(U,s)$ where $x \in U$, $s_x \notin z$.  We always consider $X^{\loc}$ with this topology.  The map \be \pi : X^{\loc} & \to & X \\ (x,z) & \mapsto & x \ee is continuous because $\pi^{-1}(U) = \U(U,1)$.

We construct a sheaf of rings $\O_{X^{\loc}}$ on $X^{\loc}$ as follows.  For an open subset $V \subseteq X^{\loc}$, we let $\O_{X^{\loc}}(V)$ be the set of $$ s = (s(x,z)) \in \prod_{(x,z) \in V} (\O_{X,x})_z $$ satisfying the \emph{local consistency condition}: For every $(x,z) \in V$, there is a basic open neighborhood $\U(U,t)$ of $(x,z)$ contained in $V$ and a section $$ \frac{a}{t^n} \in \O_X(U)_t $$ such that, for every $(x',z') \in \U(U,t)$, we have \be s(x',z') & = & \frac{a_{x'}}{t_{x'}^n} \in (\O_{X,x'})_{z'}. \ee  (Of course, one can always take $n=1$ since $\U(U,t)=\U(U,t^n)$.)  The set $\O_{X^{\loc}}(V)$ becomes a ring under coordinatewise addition and multiplication, and the obvious restriction maps make $\O_{X^{\loc}}$ a sheaf of rings on $X^{\loc}$.  There is a natural isomorphism \be \O_{X^{\loc},(x,z)} & = & (\O_{X,x})_z \ee taking the germ of $s=(s(x,z)) \in \O_{X^{\loc}}(U)$ in the stalk $\O_{X^{\loc},(x,z)}$ to $s(x,z) \in (\O_{X,x})_z$.  This map is injective because of the local consistency condition and surjective because, given any $a/b \in (\O_{X,x})_z$, we can lift $a,b$ to $\ov{a},\ov{b} \in \O_X(U)$ on some neighborhood $U$ of $x$ and define $s \in \O_{X^{\loc}}(\U(U,\ov{b}))$ by letting $s(x',z') := \ov{a}_{x'} / \ov{b}_{x'} \in (\O_{X,x'})_{z'}.$  This $s$ manifestly satisfies the local consistency condition and has $s(x,z) = a/b$.  In particular, $X^{\loc}$, with this sheaf of rings, is a locally ringed space.

To lift $\pi$ to a map of ringed spaces $\pi : X^{\loc} \to X$ we use the tautological map $$\pi^{\flat} :\O_X \to \pi_* \O_{X^{\loc}}$$ of sheaves of rings on $X$ defined on an open set $U \subseteq X$ by \be \pi^{\flat}(U) : \O_X(U) & \to & (\pi_* \O_{X^{\loc}})(U) = \O_{X^{\loc}}(\U(U,1)) \\ s & \mapsto & ( (s_x)_z ) . \ee  It is clear that the induced map on stalks \be \pi_{x,z} : \O_{X,x} & \to & \O_{X^{\loc},(x,z)} = (\O_{X,x})_z  \ee is the natural localization map, so $\pi_{x,z}^{-1}(\m_z) = z \in M_x$ and hence $\pi$ defines a $\PRS$ morphism $\pi : (X^{\loc}, \M_{X^{\loc}}) \to (X,M)$.

\begin{rem} \label{rem:localization} It would have been enough to construct the localization $(X,\T_X)^{\loc}$ at the terminal prime system.  Then to construct the localization $(X,M)^{\loc}$ at any other prime system, we just note that $(X,M)^{\loc}$ is clearly a subset of $(X,\T_X)^{\loc}$, and we give it the topology and sheaf of rings it inherits from this inclusion.  The construction of $(X,\T_X)^{\loc}$ is ``classical."  Indeed, M.~Hakim \cite{Hak} describes a construction of $(X,\T_X)^{\loc}$ that makes sense for any ringed topos $X$ (she calls it the \emph{spectrum} of the ringed topos [Hak IV.1]), and attributes the analogous construction for ringed spaces to C.~Chevalley [Hak IV.2].  Perhaps the main idea of this work is to define ``prime systems," and to demonstate their ubiquity.  The additional flexibility afforded by non-terminal prime systems is indispensible in the applications of \S\ref{section:applications}.  It is not clear to me whether this setup generalizes to ringed topoi.  \end{rem}

We sum up some basic properties of the localization map $\pi$ below.

\begin{prop} \label{prop:localization} Let $(X,M)$ be a primed ringed space with localization $\pi : X^{\loc} \to X$.  For $x \in X$, the fiber $\pi^{-1}(x)$ is naturally isomorphic in $\LRS$ to $\Spec M_x$ (Definition~\ref{defn:Spec}).\footnote{By ``fiber" here we mean $\pi^{-1}(x) := X^{\loc} \times^{\RS}_X (\{ x \}, \O_{X,x})$, which is just the set theoretic preimage $\pi^{-1}(x) \subseteq X^{\loc}$ with the topology and sheaf of rings it inherits from $X^{\loc}$.  This differs from another common usage of ``fiber" to mean $X^{\loc} \times^{\RS}_X (\{ x \}, k(x) )$. }  Under this identification, the stalk of $\pi$ at $z \in M_x$ is identified with the localization of $\O_{X,x}$ at $z$, hence $\pi$ is a localization morphism (Definition~\ref{defn:localizationmorphism}).  \end{prop}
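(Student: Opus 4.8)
The plan is to check the three levels---underlying set, topology, structure sheaf---and then read off the statement about the stalks of $\pi$.

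On sets, $\pi^{-1}(x)$ is by construction $\{(x,z) : z \in M_x\}$, which I identify with $M_x$ via $(x,z)\mapsto z$. For the topology, a basis for the subspace topology on $\pi^{-1}(x)$ consists of the sets $\U(U,s)\cap\pi^{-1}(x)$ with $x\in U$, $s\in\O_X(U)$, and by the definition of $\U(U,s)$ this set corresponds to $\{z\in M_x : s_x\notin z\}$, i.e.\ to the intersection of $M_x$ with the distinguished open $D(s_x)\subseteq\Spec\O_{X,x}$. Since every germ in $\O_{X,x}$ is of the form $s_x$ for a suitable $(U,s)$, these are exactly the sets $D(g)\cap M_x$, $g\in\O_{X,x}$, which form a basis for the topology on $M_x$ used in Definition~\ref{defn:Spec}. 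Hence $\pi^{-1}(x)\to M_x$ is a homeomorphism.

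The main work is the structure sheaf, and the point is just to match two descriptions of ``the inverse image of a $\Spec$ structure sheaf''. By the footnote to the proposition, $\O_{\pi^{-1}(x)}$ is $\iota^{-1}\O_{X^{\loc}}$ for the inclusion $\iota : \pi^{-1}(x)\into X^{\loc}$ (the tensor factor in the ringed-space fibre product being trivial because $(\{x\},\O_{X,x})\to X$ is an isomorphism on the stalk at $x$), while $\O_{\Spec_{\O_{X,x}}M_x}$ is $\lambda^{-1}\O_{\Spec\O_{X,x}}$ for $\lambda : M_x\into\Spec\O_{X,x}$. Unwinding both inverse images, a section over an open $V$ is in each case a tuple $(s(z))_{z\in V}$ with $s(z)\in(\O_{X,x})_z$ which is, near every point, equal on a basic open $D(g)\cap M_x$ to a single fraction $a/g^n\in(\O_{X,x})_g$: for $\lambda^{-1}\O_{\Spec\O_{X,x}}$ this is immediate from the standard description of $\O_{\Spec\O_{X,x}}$, and for $\iota^{-1}\O_{X^{\loc}}$ it follows from the local consistency condition defining $\O_{X^{\loc}}$ together with the computation $\U(U,t)\cap\pi^{-1}(x)=D(t_x)\cap M_x$ and the fact that the basic section $a/t^n$ restricts there to $z\mapsto a_x/t_x^n$. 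The germs $a_x,t_x$ range over all of $\O_{X,x}$, and conversely any fraction is realized by lifting numerator and denominator to a neighborhood of $x$, so the two subsheaves of $V\mapsto\prod_{z\in V}(\O_{X,x})_z$ coincide---and they agree, on stalks, with the canonical identification $\O_{X^{\loc},(x,z)}=(\O_{X,x})_z=\O_{\Spec\O_{X,x},z}$. This gives the asserted $\LRS$-isomorphism $\pi^{-1}(x)\cong\Spec M_x$, and at the same time identifies the stalk of $\pi$ at $(x,z)$, namely $\O_{X,x}=\O_{X,\pi(x,z)}\to\O_{X^{\loc},(x,z)}$, with the localization map $\O_{X,x}\to(\O_{X,x})_z$ (as was already noted in the construction).

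Finally, to see $\pi$ is a localization morphism I would exhibit the multiplicative subsheaf $S\subseteq\pi^{-1}\O_X$ whose sections over $V$ are the $g$ with $g_{(x,z)}\notin z$ for all $(x,z)\in V$: the germ over $\U(U,\tilde g)$ of any $\tilde g\in\O_X(U)$ lands in $S$, so $S_{(x,z)}=\O_{X,x}\setminus z$, whence $S^{-1}\pi^{-1}\O_X$ has stalk $(\O_{X,x})_z$ and the map $S^{-1}\pi^{-1}\O_X\to\O_{X^{\loc}}$ induced by $\pi^{\sharp}$ is a stalkwise, hence genuine, isomorphism; thus $\pi^{\sharp}$ is isomorphic to the localization of $\pi^{-1}\O_X$ at $S$. (Alternatively, one may invoke directly the stalk-local nature of the localization-morphism condition, footnoted in Definition~\ref{defn:localizationmorphism}.) The only delicate point in all of this is the structure-sheaf comparison above, and the delicacy is purely bookkeeping: carefully translating between germs at $x$ of sections defined on neighborhoods in $X$ and bare elements of $\O_{X,x}$ while comparing the two ``locally a fraction'' conditions.
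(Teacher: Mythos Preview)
Your proof is correct and follows essentially the same approach as the paper's: the set-level and topological comparisons are identical, and the structure-sheaf argument---identifying both sheaves as the ``locally a fraction'' subsheaf of $V \mapsto \prod_{z \in V}(\O_{X,x})_z$---is a slight rephrasing of the paper's construction of a map $g : \O_{X^{\loc}}|M_x \to \O_{\Spec \O_{X,x}}|M_x$ followed by a stalk check. Your added paragraph exhibiting the multiplicative subsheaf $S$ is more explicit than the paper, which simply defers the localization-morphism claim to the stalk description noted during the construction (invoking the footnote in Definition~\ref{defn:localizationmorphism}).
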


\begin{proof} With the exception of the fiber description, everything in the proposition was noted during the construction of the localization.  Clearly there is a natural bijection of sets $M_x = \pi^{-1}(x)$ taking $z \in M_x$ to $(x,z) \in \pi^{-1}(x)$.  We first show that the topology inherited from $X^{\loc}$ coincides with the one inherited from $\Spec \O_{X,x}$.  By definition of the topology on $X^{\loc}$, a basic open neighborhood of $z \in M_x$ is a set of the form \be \U(U,s) \cap M_x & = & \{ z' \in M_x : s_x \notin z' \}, \ee where $U$ is a neighborhood of $x$ in $X$ and $s \in \O_X(U)$ satisfies $s_x \notin z$.  Clearly this set depends only on the stalk of $s_x \in \O_{X,x}$ of $s$ at $x$, and any element $t \in \O_{X,x}$ lifts to a section $\ov{t} \in \O_X(U)$ on some neighborhood of $X$, so the basic neighborhoods of $z \in M_x$ are the sets of the form $$ \{ z' \in M_x : t \notin z' \}$$ where $t_x \notin z$.  But for the same set of $t$, the sets \be D(t) & := & \{ \p \in \Spec \O_{X,x} : t \notin \p \} \ee form a basis for neighborhoods of $z$ in $\Spec \O_{X,x}$ so the result is clear. 

We next show that the sheaf of rings on $M_x$ inherited from $X^{\loc}$ is the same as the one inherited from $\Spec \O_{X,x}$.  Given $f \in \O_{X,x}$, a section of $\O_{X^{\loc}}|M_x$ over the basic open set $M_x \cap D(f)$ is an element $$s = (s(z)) \in \prod_{z \in M_x \cap D(f)} (\O_{X,x})_z $$ satisfying the local consistency condition: For all $z \in M_x \cap D(f)$, there is a basic open neighborhood $\U(U,t)$ of $(x,z)$ in $X^{\loc}$ and an element $a/(t^n) \in \O_X(U)_t$ such that, for all $z' \in M_x \cap D(f) \cap \U(U,t)$, we have $s(z') \in a_{z'} / (t^n_{z'})$.  Note that \be M_x \cap D(f) \cap \U(U,t) & = & M_x \cap D(ft_x) \ee and $a_x / (t_x^n) \in \O_{\Spec \O_{X,x}}(D(ft_x))$.  The sets $D(ft_x) \cap M_x$ cover $M_x \cap D(f) \subseteq \Spec \O_{X,x}$, and we have a ``global formula" $s$ showing that the stalks of the various $a_x / (t_x^n)$ agree at any $z \in M_x \cap D(f)$, so they glue to yield an element $g(s) \in \O_{\Spec \O_{X,x}}(M_x \cap D(f))$ with $g(s)_z = s(z)$.  We can define a morphism of sheaves on $M_x$ by defining it on basic opens, so this defines a morphism of sheaves $g : \O_{X^{\loc}}|M_x \to \O_{\Spec \O_{X,x}}|M_x$ which is easily seen to be an isomorphism on stalks.  \end{proof}

\begin{rem} \label{rem:open}  Suppose $(X,M) \in \PRS$ and $U \subseteq X$ is an open subspace of $X$.  Then it is clear from the construction of $\pi : (X,M)^{\loc} \to X$ that $\pi^{-1}(U) = (U,\O_X|U,M|U)^{\loc}$. \end{rem}

The following theorem describes the universal property of localization.

\begin{thm} \label{thm:localization} Let $f: (X,M) \to (Y,N)$ be a morphism in $\PRS$.  Then there is a unique morphism $\ov{f} : (X,M)^{\loc} \to (Y,N)^{\loc}$ in $\LRS$ making the diagram \bne{dia} & \xym{ (X,M)^{\loc} \ar[d]_{\pi} \ar[r]^{\ov{f}} & (Y,N)^{\loc} \ar[d]^{\pi} \\ X \ar[r]^f & Y } \ene commute in $\RS$.  Localization defines a functor \be \PRS & \to & \LRS \\ (X,M) & \mapsto & (X,M)^{\loc} \\ f: (X,M) \to (Y,N) & \mapsto & \ov{f} : (X,M)^{\loc} \to (Y,N)^{\loc} \ee retracting the inclusion functor $\M : \LRS \to \PRS$ and right adjoint to it:  For any $Y \in \LRS$, there is a natural bijection \be \Hom_{\LRS}(Y, (X,M)^{\loc}) & = & \Hom_{\PRS}((Y,\M_Y), (X,M)). \ee  \end{thm}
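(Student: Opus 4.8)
The plan is to establish just one substantive fact: for every $(X,M) \in \PRS$ the localization map $\pi = \pi_{(X,M)} : (X^{\loc},\M_{X^{\loc}}) \to (X,M)$ is a couniversal arrow from the functor $\M$ to the object $(X,M)$, i.e.\ the $(X,M)$-component of the counit of an adjunction $\M \dashv (-)^{\loc}$. Granting that, the functoriality of $(-)^{\loc}$, the unique $\ov f$ fitting into (\ref{dia}), the displayed $\Hom$-bijection, and the retraction of $\M$ all follow from general nonsense. So fix $Y \in \LRS$ and a $\PRS$-morphism $h : (Y,\M_Y) \to (X,M)$; I will produce a unique $g : Y \to (X,M)^{\loc}$ in $\LRS$ with $\pi \circ g = h$ as maps of ringed spaces. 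On points, put $g(y) := (h(y),\, h_y^{-1}(\m_y))$; this lies in $(X,M)^{\loc}$ exactly because $h$ is a $\PRS$-morphism, i.e.\ $(\Spec h_y)(\m_y) = h_y^{-1}(\m_y) \in M_{h(y)}$. Continuity is immediate from the basis of $(X,M)^{\loc}$: for $(U,s) \in \Sec \O_X$ one computes $g^{-1}(\U(U,s)) = \{\, y \in h^{-1}(U) : (h^\sharp s)_y \notin \m_y \,\}$, the locus inside the open set $h^{-1}(U)$ on which the section $h^\sharp s$ of $\O_Y$ has invertible germ, which is open because $Y$ is locally ringed.

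For the comparison of structure sheaves, observe that $h_y : \O_{X,h(y)} \to \O_{Y,y}$ carries $\O_{X,h(y)} \setminus h_y^{-1}(\m_y)$ into units, hence factors uniquely through the localization map $\pi_{g(y)} : \O_{X,h(y)} \to (\O_{X,h(y)})_{h_y^{-1}(\m_y)} = \O_{(X,M)^{\loc},\,g(y)}$, say $g_y \circ \pi_{g(y)} = h_y$, with $g_y$ a local homomorphism. To exhibit the $g_y$ as the stalks of a map of sheaves, send a section $t = (t(x',z'))$ of $\O_{(X,M)^{\loc}}$ over an open $V$ to the family $\big(\, g_y(t(g(y))) \,\big)_{y \in g^{-1}(V)}$: by the local consistency condition, near $g(y) = (h(y),h_y^{-1}(\m_y))$ the section $t$ agrees with some $a/r^n$, $a,r \in \O_X(U)$, with $\U(U,r) \subseteq V$ and $r_{h(y)} \notin h_y^{-1}(\m_y)$, so on the open neighborhood $g^{-1}(\U(U,r))$ of $y$, on which $h^\sharp r$ is invertible, this family is given by the genuine section $(h^\sharp a)/(h^\sharp r)^n$ of $\O_Y$. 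Hence the family is a well-defined element of $\O_Y(g^{-1}(V))$, the assignment is natural in $V$, and we get a map of ringed spaces $g : Y \to (X,M)^{\loc}$ with local stalk maps $g_y$, i.e.\ a morphism in $\LRS$; and $\pi \circ g = h$ by construction — it is clear on points, and on stalks it is the defining identity $g_y \circ \pi_{g(y)} = h_y$.

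For uniqueness, suppose $g' : Y \to (X,M)^{\loc}$ in $\LRS$ also satisfies $\pi \circ g' = h$. Writing $g'(y) = (h(y),w)$ with $w \in M_{h(y)}$, the relation $g'_y \circ \pi_{g'(y)} = h_y$ on stalks, upon taking preimages of $\m_y$ and using that $g'_y$ is local while the preimage of the maximal ideal under $\pi_{g'(y)}$ (the localization at $w$) is $w$, forces $w = h_y^{-1}(\m_y)$; so $g'$ and $g$ agree on points, and then $g'_y \circ \pi_{g'(y)} = h_y = g_y \circ \pi_{g(y)}$ with $\pi_{g(y)} = \pi_{g'(y)}$ an epimorphism of rings forces $g'_y = g_y$ for all $y$, whence $g' = g$ since a map of sheaves is determined by its stalks. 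This proves the couniversality of $\pi$, hence the natural bijection $\Hom_{\LRS}(Y,(X,M)^{\loc}) \xrightarrow{\sim} \Hom_{\PRS}((Y,\M_Y),(X,M))$, $g \mapsto \pi \circ g$ (its inverse is $h \mapsto g$, naturality in $Y$ being trivial), so $(-)^{\loc}$ is right adjoint to $\M$ with counit $\pi$, in particular a functor. Its value on a $\PRS$-morphism $f : (X,M) \to (Y,N)$ is, by the standard description of a right adjoint on morphisms, the unique $\LRS$-morphism $\ov f : (X,M)^{\loc} \to (Y,N)^{\loc}$ with $\pi \circ \ov f = f \circ \pi$ in $\RS$ — precisely the unique morphism making (\ref{dia}) commute — and functoriality of $f \mapsto \ov f$ is then forced by that uniqueness. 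Finally, $\M$ being fully faithful, the unit $Y \to (\M Y)^{\loc}$ is an isomorphism, so $(-)^{\loc}$ retracts $\M$; equivalently one checks directly that $\pi : (\M Y)^{\loc} \to Y$ is an $\LRS$-isomorphism, the only point being that localizing a local ring at its maximal ideal changes nothing.

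I expect the single piece of real work to be in the second paragraph: checking that a locally consistent family on $(X,M)^{\loc}$ genuinely pulls back along $g$ to a section of $\O_Y$, and that $g$ is a \emph{local} morphism of ringed spaces. That is where the explicit local consistency condition is used, and where the hypothesis $Y \in \LRS$ enters — in the form that the invertibility locus of a section of $\O_Y$ is open, and that each stalk $\O_{Y,y}$ is local. Everything else is either a one-line check (well-definedness of the point map $g$ from $f \in \PRS$; continuity; the fact that ring localization maps are epimorphisms) or pure category theory.
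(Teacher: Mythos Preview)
Your argument is correct, and the route is genuinely different from the paper's. The paper first builds the functor explicitly --- it constructs $\ov f : (X,M)^{\loc} \to (Y,N)^{\loc}$ for a general $\PRS$-morphism $f$, checks continuity on basic opens, manufactures $\ov f^{\sharp}$ via the local consistency condition, proves uniqueness of $\ov f$ by a stalk computation, and only \emph{then} verifies that $\pi : (X,\M_X)^{\loc} \to X$ is an isomorphism for $X \in \LRS$, from which the retraction and the adjunction are read off. You invert the order: you prove directly that each $\pi_{(X,M)}$ is a couniversal arrow from $\M$, and let category theory hand you the functor, the unique lift $\ov f$, the $\Hom$-bijection, and (via full faithfulness of $\M$) the retraction. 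The actual analytic content --- continuity via the invertibility-locus of a section on a locally ringed space, the factorization of $h_y$ through the localization, and the use of the local consistency datum $a/r^n$ to globalize the stalk maps --- is essentially the same in both proofs; what differs is the packaging. Your version is slightly more economical (one universal-property verification instead of an explicit construction of $\ov f$ plus a separate isomorphism check), while the paper's version has the advantage of giving the formula $\ov f(x,z) = (f(x), f_x^{-1}(z))$ and the description of $\ov f^{\sharp}$ up front, which is used verbatim in later computations (e.g.\ Lemma~\ref{lem:important}).
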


\begin{proof}  We first establish the \emph{existence} of such a morphism $\ov{f}$.  The fact that $f$ is a morphism of primed ringed spaces means that we have a function \be  M_x & \to & N_{f(x)} \\ z & \mapsto & f_x^{-1}(z) \ee for each $x \in X$, so we can complete the diagram of topological spaces $$ \xym{ X^{\loc} \ar[d]^{\pi} \ar[r]^{\ov{f}} & (Y,N)^{\loc} \ar[d]^{\pi}  \\ X \ar[r]^{f}  & Y } $$ (at least on the level of sets) by setting \be \ov{f}(x,z) & := & (f(x), f_x^{-1}(z)) \in Y^{\loc}. \ee  To see that $\ov{f}$ is continuous it is enough to check that the preimage $\ov{f}^{-1}(\U(U,s))$ is open in $X^{\loc}$ for each basic open subset $\U(U,s)$ of $Y^{\loc}$.  But it is clear from the definitions that \be \ov{f}^{-1} \U(U,s) & = & \U( f^{-1}(U), f^{\sharp} f^{-1}(s) ) \ee (note $(f^{\sharp} f^{-1}(s))_x = f_x(s_{f(x)})$).  

Now we want to define a map $\ov{f}^{\sharp} : \ov{f}^{-1} \O_{X^{\loc}} \to \O_Y$ of sheaves of rings on $Y$ (with ``local stalks") making the diagram $$ \xym@C+20pt{ \O_{X^{\loc}} & \ar@{.>}[l]_-{\ov{f}^{\sharp}} \ov{f}^{-1} \O_{Y^{\loc}} \\ \pi^{-1} \O_X \ar[u] & \ar[l]_-{\pi^{-1} f^{\sharp}} \pi^{-1} f^{-1} \O_Y \ar[u] } $$ commute in $\An(Y^{\loc})$.  The stalk of this diagram at $(x,z) \in X^{\loc}$ is a diagram $$ \xym@C+20pt{ (\O_{X,x})_z & \ar@{.>}[l]_-{\ov{f}_{(x,z)}} (\O_{Y,f(x)})_{f_x^{-1}z} \\ \O_{X,x} \ar[u]^{\pi_{(x,z)}} & \ar[l]^{f_x}  \O_{Y,f(x)} \ar[u]_{\pi_{(f(x),f_x^{-1}(z))}} } $$ in $\An$ where the vertical arrows are the natural localization maps; these are epimorphisms, and the universal property of localization ensures that there is a unique local morphism of local rings $\ov{f}_{(x,z)}$ completing this diagram.  We now want to show that there is actually a (necessarily unique) map $\ov{f}^{\sharp} : \ov{f}^{-1} \O_{X^{\loc}} \to \O_Y$ of sheaves of rings on $X^{\loc}$ whose stalk at $(x,z)$ is the map $\ov{f}_{(x,z)}$.  By the universal property of sheafification, we can work with the presheaf inverse image $\ov{f}^{-1}_{\rm pre} \O_{X^{\loc}}$ instead.  A section $[V,s]$ of this presheaf over an open subset $W \subseteq X^{\loc}$ is represented by a pair $(V,s)$ where $V \subseteq Y^{\loc}$ is an open subset of $Y^{\loc}$ containing $\ov{f}(W)$ and $$ s = (s(y,z)) \in \O_{Y^{\loc}}(V) \subseteq \prod_{(y,z) \in V} (\O_{Y,y})_z . $$  I claim that we can define a section $f_{\rm pre}^{\sharp}[V,s] \in \O_{X^{\loc}}(W) $ by the formula \be f_{\rm pre}^{\sharp}[V,s](x,z)  :=  s(f(x),f_x^{-1}(z))  . \ee  It is clear that this element is independent of replacing $V$ with a smaller neighborhood of $f[W]$ and restricting $s$, but we still must check that $$ f_{\rm pre}[V,s] \in \prod_{(x,z) \in W} (\O_{X,x})_z$$  satisfies the local consistency condition.  Suppose $$ \frac{a}{t^n} \in \O_X(U)_t$$ witnesses local consistency for $s \in \O_{Y^{\loc}}(V)$ on a basic open subset $\U(U,t) \subseteq V$.  Then it is straightforward to check that the restriction of $$ \frac{ f^{\sharp}(f^{-1}(a)) }{ f^{\sharp}(f^{-1}t^n)} \in \O_Y( \ov{f}^{-1}( \U(U,t) )) $$ to $\ov{f}^{-1} \U(U,t) \cap W $ witnesses local consistency of $f_{\rm pre}^{\sharp}[V,s]$ on \be \ov{f}^{-1}(\U(U,t)) \cap W & = & \U(f^{-1}(U), f^\sharp f^{-1} t) \cap W . \ee  It is clear that our formula for $f^{\sharp}_{\rm pre}[V,s]$ respects restrictions and has the desired stalks and commutativity, so its sheafification provides the desired map of sheaves of rings.

This completes the construction of $\ov{f} : X^{\loc} \to Y^{\loc}$ in $\LRS$ making \eqref{dia} commute in $\RS$.  We now establish the uniqueness of $\ov{f}$.  Suppose $\ov{f}' : X^{\loc} \to Y^{\loc}$ is a morphism in $\LRS$ that also makes \eqref{dia} commute in $\RS$.   We first prove that $\ov{f} = \ov{f}'$ on the level of topological spaces.  For $x \in X$ the commutativity of \eqref{dia} ensures that $\ov{f}'(x,z) = (f(x),z')$ for some $z ' \in N_{f(x)} \subseteq \Spec \O_{Y,f(x)},$ so it remains only to show that $z' = f_x^{-1}(z).$  The commutativity of \eqref{dia} on the level of stalks at $(x,z) \in X^{\loc}$ gives a commutative diagram of rings $$ \xym{ (\O_{X,x})_z & \ar[l]_{\ov{f}'_{(x,z)}} (\O_{Y,f(x)})_{z'} \\ \O_{X,x} \ar[u]^{\pi_{(x,z)}} & \ar[l]^{f_x}  \O_{Y,f(x)} \ar[u]_{\pi_{(f(x),z')}} } $$ where the vertical arrows are the natural localization maps.  From the commutativity of this diagram and the fact that $(\ov{f}')_{(x,z)}^{-1}(\m_z) = \m_{z'}$ (because $\ov{f}'_{(x,z)}$ is local) we find \be z' & = & \pi_{(f(x),z')}^{-1}(\m_{z'}) \\ & = & \pi_{(f(x),z')}^{-1} (\ov{f}')_{(x,z)}^{-1}(\m_z) \\ & = & f_x^{-1} \pi_x^{-1}(\m_z)  \\ & = & f_x^{-1}(z) \ee as desired. This proves that $\ov{f} = \ov{f}'$ on topological spaces, and we already argued the uniqueness of $\ov{f}^{\sharp}$ (which can be checked on stalks) during its construction.    

The last statements of the theorem follow easily once we prove that the localization morphism $\pi : (X,\M_X)^{\loc} \to X$ is an isomorphism for any $X \in \LRS$.  On the level of topological spaces, it is clear that $\pi$ is a continuous bijection, so to prove it is an isomorphism we just need to prove it is open.  To prove this, it is enough to prove that for any $(U,s) \in \Sec \O_X$, the image of the basic open set $\U(U,s)$ under $\pi$ is open in $X$.  Indeed, \be \pi( \U(U,s) ) & = & \{ x \in U : s_x \notin \m_x \} \\ & = & \{ x \in U : s_x \in \O_{X,x}^* \} \ee is open in $U$, hence in $X$, because invertibility at the stalk implies invertibility on a neighborhood.  To prove that $\pi$ is an isomorpism of locally ringed spaces, it remains only to prove that $\pi^{\sharp} : \O_X \to \O_{X^{\loc}}$ is an isomorphism of sheaves of rings on $X=X^{\loc}$.   Indeed,  Proposition~\ref{prop:localization} says the stalk of $\pi^{\sharp}$ at $(x, \m_x) \in X^{\loc}$ is the localization of the local ring $\O_{X,x}$ at its unique maximal ideal, which is an isomorphism in $\LAn$. \end{proof}

\begin{lem} \label{lem:important} Let $A \in \An$ be a ring, $(X,\O_X) := \Spec A$, and let $*$ be the punctual space.  Define a prime system $N$ on $(X,\underline{A}_X)$ by $$N_x := \{ x \} \subseteq \Spec \underline{A}_{X,x} = \Spec A = X.$$  Let $a : (X,\O_X) \to (X,\underline{A}_X)$ be the natural $\RS$ morphism.  Then $\M_{(X,\O_X)} = a^* N$ and the natural $\PRS$ morphisms $$(X,\O_X,\M_{(X,\O_X)}) \to (X,\underline{A}_X,N) \to (*,A,\Spec A) = (*,A,\T_{(*,A)}) $$ yield natural isomorphisms $$ (X,\O_X) = (X,\O_X,\M_{(X,\O_X)})^{\loc} = (X,\underline{A}_X,N)^{\loc} = (*,A,\Spec A)^{\loc} $$ in $\LRS$. \end{lem}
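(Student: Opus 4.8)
My plan is to verify the three isomorphisms one at a time and then splice them together using the functoriality of localization from Theorem~\ref{thm:localization}. The leftmost identification is immediate: $(X,\O_X)=\Spec A$ lies in $\LRS$, and Theorem~\ref{thm:localization} says that localization retracts the inclusion $\M$, so $(X,\O_X,\M_{(X,\O_X)})^{\loc}=(X,\O_X)$ via $\pi$. For the rightmost identification I would use that, the base $*$ being a point, the space $(*,A,\T_{(*,A)})^{\loc}$ is simply the fibre $\pi^{-1}(*)$ of its own localization map; by Proposition~\ref{prop:localization} this fibre is $\Spec_A(\T_{(*,A),*})=\Spec_A(\Spec A)$, which by Definition~\ref{defn:Spec} is the scheme $\Spec A$ itself (the subset being all of $\Spec A$, the inherited topology is the Zariski topology, and the inverse image of $\O_{\Spec A}$ along the identity is $\O_{\Spec A}$). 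One could instead unwind the construction directly---the basic opens $\U(*,s)$ are exactly the $D(s)$ for $s\in A$, and the local consistency description of $\O_{(*,A,\T)^{\loc}}$ is, word for word, the usual ``locally a fraction'' description of $\O_{\Spec A}$---which is the classical construction recalled in Remark~\ref{rem:localization}.

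For the middle term I would first record the stalk computations that underlie everything. The map $a_x\colon A=\underline A_{X,x}\to\O_{X,x}=A_{\p}$ (with $\p=x$) is the localization homomorphism, so $a_x^{-1}(\m_x)=\p=x$; this is at once the asserted equality $\M_{(X,\O_X)}=a^{*}N$ and the verification that $a$ is a $\PRS$ morphism, while $b_x\colon A\to A$ is the identity and $b$ is a $\PRS$ morphism automatically because the target $(*,A,\T_{(*,A)})$ carries the terminal prime system. Applying the localization functor produces $\LRS$ morphisms $\ov a$ and $\ov b$ filling in the commuting squares of Theorem~\ref{thm:localization}, and it remains to check each is an isomorphism. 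On underlying sets all three localizations are canonically $\Spec A$---for $(X,\underline A_X,N)^{\loc}$ because $N_x=\{x\}$ forces its points to be the pairs $(x,x)$---and the formula $\ov f(x,z)=(f(x),f_x^{-1}(z))$ together with the computations of $a_x$ and $b_x$ identifies $\ov a$ and $\ov b$ with the identity map of $\Spec A$ on points. Each of the three spaces carries the Zariski topology: this is clear for the two outer ones, and for $(X,\underline A_X,N)^{\loc}$ one notes that a section $s\in\underline A_X(U)$ is locally constant, so $\U(U,s)$ is a union of sets of the form $U_i\cap D(a_i)$ and hence Zariski open, whereas $\U(U,1)=U$ recovers every Zariski open. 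So $\ov a$ and $\ov b$ are homeomorphisms, and only the stalks remain.

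For that last point I would invoke the explicit description of $\ov f^{\sharp}$ in the proof of Theorem~\ref{thm:localization}, where the stalk $\ov f_{(x,z)}$ is the \emph{unique} local homomorphism completing a commuting square whose other three arrows are localization maps. For $\ov b$ at $(x,x)$ all of those arrows are $A\to A_{\p}$ and the bottom one is $b_x=\Id_A$, forcing $\ov b_{(x,x)}=\Id_{A_{\p}}$; for $\ov a$ at $(x,\m_x)$ the bottom arrow is the localization $a_x\colon A\to A_{\p}$ while the vertical on the $(X,\O_X,\M_{(X,\O_X)})^{\loc}$ side is the localization of the local ring $A_{\p}$ at its maximal ideal, i.e.\ the identity, so again $\ov a_{(x,\m_x)}=\Id_{A_{\p}}$. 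A homeomorphism that is a stalkwise isomorphism is an isomorphism in $\LRS$, so $\ov a$ and $\ov b$ are isomorphisms, the whole chain consists of isomorphisms, and since $\Spec$, $\underline A_X$, the prime systems $N$ and $\M$, the morphisms $a$ and $b$ and the localization functor are all functorial in $A$, these isomorphisms are natural. I expect the only step needing real care to be the bookkeeping for the middle term---matching its underlying set, its topology and its inverse-image structure sheaf simultaneously with those of $\Spec A$---together with the identification of $\O_{(*,A,\T)^{\loc}}$ with $\O_{\Spec A}$ in the rightmost step; everything else is formal from Theorem~\ref{thm:localization} and Proposition~\ref{prop:localization}.
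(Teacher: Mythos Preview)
Your proof is correct and follows essentially the same strategy as the paper's: verify $\M_{(X,\O_X)}=a^*N$ via the stalk computation, identify all three localizations bijectively with $\Spec A$ via the explicit description of $\ov f$, check that each carries the Zariski topology, and then conclude by matching stalks. Your treatment of the topology on $(X,\underline{A}_X,N)^{\loc}$ via local constancy of sections of $\underline{A}_X$ is in fact slightly cleaner than the paper's version, which passes through connected open sets (a minor nuisance since $\Spec A$ need not be locally connected), but otherwise the arguments coincide.
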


\begin{proof} Note that the stalk $a_x : \underline{A}_{X,x} \to \O_{X,x}$ of $a$ at $x \in X$ is the localization map $A \to A_x$, and, by definition, $(a^*N)_x$ is the set prime ideals $z$ of $A_x$ pulling back to $x \subseteq A$ under $a_x : A \to A_x$.  The only such prime ideal is the maximal ideal $\m_x \subseteq A_x$, so $(a^*N)_x = \{ \m_x \} = \M_{(X,\O_X),x}$.  

Next, it is clear from the description of the localization of a $\PRS$ morphism that the localizations of the morphisms in question are bijective on the level of sets.  Indeed, the bijections are given by $$ x \leftrightarrow (x,\m_x) \leftrightarrow (x,x) \leftrightarrow (*,x),$$ so to prove that they are continuous, we just need to prove that they have the same topology.  Indeed, we will show that they all have the usual (Zariski) topology on $X=\Spec A$.  This is clear for $(X,\O_X,\M_{(X,\O_X)})$ because localization retracts $\M$ (Theorem~\ref{thm:localization}), so $(X,\O_X,\M_{(X,\O_X)})^{\loc} = (X,\O_X)$, and it is clear for $(*,A,\Spec A)$ because of the description of the fibers of localization in Proposition~\ref{prop:localization}.  For $(X,\underline{A}_X,N)$, we note that the sets $\U(U,s)$, as $U$ ranges over \emph{connected} open subsets of $X$ (or any other family of basic opens for that matter), form a basis for the topology on $(X,\underline{A}_X,N)^{\loc}$.  Since $U$ is connected, $s \in \underline{A}_X(U) = A$, and $\U(U,s)$ is identified with the usual basic open subset $D(s) \subseteq X$ under the bijections above.  This proves that the $\LRS$ morphisms in question are isomorphisms on the level of spaces, so it remains only to prove that they are isomorphisms on the level of sheaves of rings, which we can check on stalks using the description of the stalks of a localization in Proposition~\ref{prop:localization}. \end{proof}

\begin{rem} \label{rem:notlocal} If $X \in \LRS$, and $M$ is a prime system on $X$, the map $\pi : X^{\loc} \to X$ is not generally a morphism in $\LRS$, even though $X,X^{\loc} \in \LRS$.  For example, if $X$ is a point whose ``sheaf" of rings is a local ring $(A,\m)$, and $M = \{ \p \}$ for some $\p \neq \m$, then $X^{\loc}$ is a point with the ``sheaf" of rings $A_{\p}$, and the ``stalk" of $\pi^{\sharp}$ is the localization map $l : A \to A_{\p}$.  Even though $A, A_{\p}$ are local, this is \emph{not} a local morphism because $l^{-1}( \p A_{\p}) = \p \neq \m$. \end{rem}

\section{Applications} \label{section:applications} In this section we give some applications of localization of ringed spaces.

\subsection{Inverse limits}  \label{section:inverselimits} We first prove that $\LRS$ has all inverse limits.

\begin{thm} \label{thm:inverselimits} The category $\PRS$ has all inverse limits, and both the localization functor $\PRS \to \LRS$ and the forgetful functor $\PRS \to \RS$ preserve them. \end{thm}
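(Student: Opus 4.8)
The plan is to construct inverse limits in $\PRS$ directly, by pairing the classical construction of inverse limits in $\RS$ with the intersection operation on prime systems, and then to read off the statement about localization from the adjunction in Theorem~\ref{thm:localization}.

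First I would recall (or quickly reprove) that $\RS$ has all small inverse limits: given a diagram $i \mapsto X_i$ in $\RS$, the limit $X$ has underlying space $\varprojlim_i \u{X_i}$ in $\Top$, with projections $p_i$, and structure sheaf $\O_X := \varinjlim_i p_i^{-1}\O_{X_i}$, the colimit taken in the category $\An(X)$ of sheaves of rings on $X$; the universal property of $(X,\O_X)$ with the $p_i$ follows because $g^{-1}$, being a left adjoint, commutes with this colimit for any map $g$ into $X$. Granting this, for a diagram $i \mapsto (X_i,M_i)$ in $\PRS$ I would put $X := \varprojlim_i X_i$ in $\RS$, with projections $p_i : X \to X_i$, and equip $X$ with the prime system $M := \bigcap_i p_i^* M_i$, using the inverse image prime systems of Remark~\ref{rem:inverseimage} and the intersection in $\PS(X)$.

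Then I would check that $(X,M)$, together with the $p_i$, is the inverse limit of $i \mapsto (X_i,M_i)$ in $\PRS$. Each $p_i : (X,M) \to (X_i,M_i)$ is a $\PRS$ morphism since $M \subseteq p_i^* M_i$ by definition, and the cone identities already hold in $\RS$. For the universal property, a cone $g_i : (Z,P) \to (X_i,M_i)$ in $\PRS$ induces, via the underlying $\RS$ morphisms, a unique $\RS$ morphism $g : Z \to X$ with $p_i g = g_i$ for all $i$; it remains to see that $g$ is a $\PRS$ morphism, i.e.\ $P \subseteq g^* M$. Using that inverse image of prime systems commutes with arbitrary intersections — immediate pointwise, since $\Spec g_z$ is an ordinary map of sets and set-theoretic preimage commutes with intersections — together with the naturality $g^*(p_i^* M_i) = (p_i g)^* M_i$ of Remark~\ref{rem:inverseimage}, I get $g^* M = \bigcap_i (p_i g)^* M_i = \bigcap_i g_i^* M_i$, and $P \subseteq g_i^* M_i$ for every $i$ (each $g_i$ being a $\PRS$ morphism) forces $P \subseteq g^* M$. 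Uniqueness of $g$ in $\PRS$ is inherited from its uniqueness in $\RS$. This proves $\PRS$ has all inverse limits, and the construction makes it transparent that the forgetful functor $\PRS \to \RS$ carries the limit cone $(p_i)$ to the limit cone in $\RS$, hence preserves inverse limits.

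Finally, Theorem~\ref{thm:localization} identifies the localization functor $\PRS \to \LRS$, $(X,M) \mapsto (X,M)^{\loc}$, as the right adjoint of the inclusion $\M : \LRS \into \PRS$, and a right adjoint preserves every inverse limit that exists in its source. Concretely, with $(X,M) = \varprojlim_i (X_i,M_i)$ as above, for $Y \in \LRS$ one has, naturally in $Y$,
\[
\begin{aligned}
\Hom_{\LRS}\bigl(Y,(X,M)^{\loc}\bigr) &= \Hom_{\PRS}\bigl((Y,\M_Y),(X,M)\bigr) = \varprojlim_i \Hom_{\PRS}\bigl((Y,\M_Y),(X_i,M_i)\bigr) \\
&= \varprojlim_i \Hom_{\LRS}\bigl(Y,(X_i,M_i)^{\loc}\bigr),
\end{aligned}
\]
so $(X,M)^{\loc}$, with the induced projections, is the inverse limit of $i \mapsto (X_i,M_i)^{\loc}$ in $\LRS$ (and in particular $\LRS$ has all inverse limits). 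I do not expect a genuine obstacle: the only mildly delicate point is the commutation of inverse image of prime systems with intersections, which is trivial pointwise; the rest is the standard $\RS$-limit construction plus formal consequences of the adjunction.
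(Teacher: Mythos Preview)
Your proof is correct and follows essentially the same approach as the paper: build the limit in $\RS$, equip it with the intersection of the pulled-back prime systems, verify the universal property pointwise, and invoke the adjunction of Theorem~\ref{thm:localization} for preservation under localization. The only cosmetic difference is that you package the verification $P \subseteq g^*M$ via the observation that inverse image of prime systems commutes with intersections, whereas the paper chases an individual prime $z \in N_y$ through the definitions; these are the same check.
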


\begin{proof} Suppose $i \mapsto (X_i,M_i)$ is an inverse limit system in $\PRS$.  Let $X$ be the inverse limit of $i \mapsto X_i$ in $\Top$ and let $\pi_i : X \to X_i$ be the projection.  Let $\O_X$ be the direct limit of $i \mapsto \pi_i^{-1} \O_{X_i}$ in $\An(X)$ and let $\pi_i^{\sharp} : \pi_i^{-1} \O_{X_i} \to \O_X$ be the structure map to the direct limit, so we may regard $X=(X,\O_X)$ as a ringed space and $\pi_i$ as a morphism of ringed spaces $X \to X_i$.  It immediate from the definition of a morphism in $\RS$ that $X$ is the inverse limit of $i \mapsto X_i$ in $\RS$.  Let $M$ be the prime system on $X$ given by the inverse limit (intersection) of the $\pi_i^*M_i$.  Then it is clear from the definition of a morphism in $\PRS$ that $(X,M)$ is the inverse limit of $i \mapsto (X_i,M_i)$, but we will spell out the details for the sake of concreteness and future use.  

Given a point $x = (x_i) \in X$, we have defined $M_x$ to be the set of $z \in \Spec \O_{X,x}$ such that $\pi_{i,x}^{-1}(z) \in M_{x_i} \subseteq \Spec \O_{X_i,x_i}$ for every $i$, so that $\pi_i$ defines a $\PRS$ morphism $\pi_i : (X,M) \to (X_i,M_i)$.  To see that $(X,M)$ is the direct limit of $i \mapsto (X_i,M_i)$ suppose $f_i : (Y,N) \to (X_i,M_i)$ are morphisms defining a natural transformation from the constant functor $i \mapsto (Y,N)$ to $i \mapsto (X_i,M_i)$.  We want to show that there is a unique $\PRS$ morphism $f : (Y,N) \to (X,M)$ with $\pi_i f = f_i$ for all $i$.  Since $X$ is the inverse limit of $i \mapsto X_i$ in $\RS$, we know that there is a unique map of ringed spaces $f : Y \to X$ with $\pi_i f = f_i$ for all $i$, so it suffices to show that this $f$ is a $\PRS$ morphism.  Let $y \in Y$, $z \in N_y$.  We must show $f_y^{-1}(z) \in M_{f(x)}$.  By definition of $M$, we must show $(\pi_i)_{f(x)}^{-1}(f_y^{-1}(z)) \in M_{\pi_i(f(x))} = M_{f_i(y)}$ for every $i$.  But $\pi_i f = f_i$ implies $f_y (\pi_i)_{f(x)} = (f_i)_y$, so $(\pi_i)_{f(x)}^{-1}(f_y^{-1}(z)) = (f_i)_y^{-1}(z) $ is in $M_{f_i(y)}$ because $f_i$ is a $\PRS$ morphism.

The fact that the localization functor preserves inverse limits follows formally from the adjointness in Theorem~\ref{thm:localization}. \end{proof}

\begin{cor} \label{cor:inverselimits} The category $\LRS$ has all inverse limits. \end{cor}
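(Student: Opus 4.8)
The plan is to deduce this immediately from the two preceding results. By Corollary~\ref{cor:inverselimits}'s hypotheses, Theorem~\ref{thm:inverselimits} gives us that $\PRS$ has all inverse limits and that the localization functor $(\slot)^{\loc} : \PRS \to \LRS$ preserves them. So, given an inverse limit system $i \mapsto X_i$ in $\LRS$, I would first push it into $\PRS$ via the fully faithful embedding $\M : \LRS \to \PRS$ of \eqref{M}, obtaining the system $i \mapsto (X_i, \M_{X_i})$. Let $(X,M)$ be its inverse limit in $\PRS$ (which exists by Theorem~\ref{thm:inverselimits}).

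The key step is to check that $(X,M)^{\loc}$, together with the composites of the projections $(X,M)^{\loc} \to (X,M) \to (X_i,\M_{X_i})$ with the localization retraction, serves as the inverse limit of the original system in $\LRS$. This follows formally from the adjunction in Theorem~\ref{thm:localization}: for any $Y \in \LRS$,
\[
\Hom_{\LRS}(Y, (X,M)^{\loc}) = \Hom_{\PRS}((Y,\M_Y),(X,M)) = \varprojlim_i \Hom_{\PRS}((Y,\M_Y),(X_i,\M_{X_i})) = \varprojlim_i \Hom_{\LRS}(Y,X_i),
\]
where the middle equality is the universal property of $(X,M)$ in $\PRS$ and the outer two equalities are the adjunction of Theorem~\ref{thm:localization} (using that $\M$ is fully faithful, so $\Hom_{\PRS}((Y,\M_Y),(X_i,\M_{X_i})) = \Hom_{\LRS}(Y,X_i)$). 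Naturality of all these bijections in $Y$ identifies $(X,M)^{\loc}$ as the desired inverse limit.

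There is essentially no obstacle here — the content was already absorbed into Theorem~\ref{thm:inverselimits} and Theorem~\ref{thm:localization}; this corollary is pure formal nonsense about a right adjoint (here $(\slot)^{\loc}$, viewed as right adjoint to $\M$) carrying limits to limits, combined with the existence of limits in $\PRS$. The only thing one might want to remark on is that one does \emph{not} simply invoke "right adjoints preserve limits" applied to $\M$ going the wrong way; rather $(\slot)^{\loc}$ is the right adjoint, so it preserves the limit $(X,M)$ computed in $\PRS$, and since it retracts $\M$ the image $(X,M)^{\loc}$ receives maps to each $X_i$ in $\LRS$. I would keep the written proof to two or three sentences citing Theorem~\ref{thm:inverselimits} and the adjointness of Theorem~\ref{thm:localization}.
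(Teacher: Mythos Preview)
Your proposal is correct and follows essentially the same route as the paper: push the system into $\PRS$ via $\M$, take the inverse limit $(X,M)$ there, and then use the adjunction/retraction of Theorem~\ref{thm:localization} to identify $(X,M)^{\loc}$ with the desired limit in $\LRS$. The paper phrases the last step as ``localization preserves inverse limits and retracts $\M$'' rather than writing out the chain of $\Hom$-bijections, but this is the same formal argument.
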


\begin{proof} Suppose $i \mapsto X_i$ is an inverse limit system in $\LRS$.  Composing with $\M$ yields an inverse limit system $i \mapsto (X_i,\M_{X_i})$ in $\PRS$.  By the theorem, the localization $(X,M)^{\loc}$ of the inverse limit $(X,M)$ of $i \mapsto (X_i,\M_{X_i})$ is the inverse limit of $i \mapsto (X_i,\M_{X_i})^{\loc}$ in $\LRS$.  But localization retracts $\M$  (Theorem~\ref{thm:localization}) so $i \mapsto (X_i,\M_{X_i})^{\loc}$ is our original inverse limit system $i \mapsto X_i$. \end{proof} 

We can also obtain the following result of C.~Chevalley mentioned in [Hak IV.2.4].

\begin{cor} \label{cor:Chevalley} The functor \be \RS & \to & \LRS \\ X & \mapsto & (X,\T_X)^{\loc} \ee is right adjoint to the inclusion $\LRS \into \RS$. \end{cor}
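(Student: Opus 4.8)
The plan is to recognize the functor $X \mapsto (X,\T_X)^{\loc}$ as a composite of two right adjoints and invoke the fact that right adjoints compose. First I would recall the two adjunctions already in hand. Theorem~\ref{thm:localization} exhibits localization $(-)^{\loc} : \PRS \to \LRS$ as right adjoint to the inclusion $\M : \LRS \to \PRS$ of \eqref{M}; in particular $\M$ is a left adjoint. The discussion surrounding \eqref{T} exhibits $\T : \RS \to \PRS$ as right adjoint to the forgetful functor $U : \PRS \to \RS$, $(X,M) \mapsto X$; in particular $U$ is a left adjoint.

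Next I would observe that the inclusion $\LRS \into \RS$ factors as $U \circ \M$: indeed $\M$ sends $Y \in \LRS$ to $(Y,\M_Y)$, and $U$ then forgets the prime system, returning the ringed space $Y$; on morphisms, a $\PRS$ morphism $(Y,\M_Y) \to (Y',\M_{Y'})$ is \emph{by definition} exactly an $\RS$ morphism $Y \to Y'$ that happens to be an $\LRS$ morphism, so $U \circ \M$ is precisely the (faithful, non-full) inclusion $\LRS \into \RS$. Dually, the functor in question, $X \mapsto (X,\T_X)^{\loc}$, is the composite $(-)^{\loc} \circ \T : \RS \to \PRS \to \LRS$.

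Then I would conclude by composing the two $\Hom$-bijections. For $Y \in \LRS$ and $X \in \RS$,
\be
\Hom_{\LRS}(Y, (X,\T_X)^{\loc}) & = & \Hom_{\PRS}((Y,\M_Y), (X,\T_X)) \\
& = & \Hom_{\RS}(U(Y,\M_Y), X) \\
& = & \Hom_{\RS}(Y, X),
\ee
where the first equality is the adjunction of Theorem~\ref{thm:localization} applied with $M = \T_X$, and the second is the adjunction between $U$ and $\T$. (Concretely, the middle bijection just records that the $\PRS$-condition $(\Spec f_y)(\M_{Y,y}) \subseteq \T_{X,f(y)}$ is vacuous, since $\T_{X,f(y)} = \Spec \O_{X,f(y)}$.) Each of the two displayed bijections is natural in both variables, so the composite is too, and this exhibits $(-)^{\loc} \circ \T$ as right adjoint to $U \circ \M$, i.e.\ to the inclusion $\LRS \into \RS$.

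I do not expect any genuine obstacle here: once Theorem~\ref{thm:localization} and the adjunction between $U$ and $\T$ are available, the statement is formal. The only point meriting a moment's care is checking that the naturality of the composite bijection is compatible with the identification of $U \circ \M$ with the inclusion, but this is immediate from the constructions of the two adjunctions and requires no computation.
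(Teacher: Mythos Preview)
Your proof is correct and follows essentially the same approach as the paper: both compose the adjunction of Theorem~\ref{thm:localization} with the adjunction between the forgetful functor and $\T$ to obtain the chain $\Hom_{\LRS}(Y,(X,\T_X)^{\loc}) = \Hom_{\PRS}((Y,\M_Y),(X,\T_X)) = \Hom_{\RS}(Y,X)$. Your version is simply more explicit about the factorization $\LRS \into \RS$ as $U \circ \M$ and about naturality, but the mathematical content is identical.
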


\begin{proof} This is immediate from the adjointness property of localization in Theorem~\ref{thm:localization} and the adjointness property of the functor $\T$: For $Y \in \LRS$ we have \be \Hom_{\LRS}(Y,(X,\T_X)^{\loc}) & = & \Hom_{\PRS}((Y,\M_Y), (X,\T_X)) \\ & = & \Hom_{\RS}(Y,X). \ee \end{proof}

Our next task is to compare inverse limits in $\Sch$ to those in $\LRS$.  Let $* \in \Top$ be ``the" punctual space (terminal object), so $\An(*) = \An$.  The functor \be \An & \to & \RS \\ A & \mapsto & (*,A) \ee is clearly left adjoint to \be \Gamma : \RS^{\rm op} & \to & \An \\ X & \mapsto & \Gamma(X,\O_X) . \ee By Lemma~\ref{lem:important} (or Proposition~\ref{prop:localization}) we have \be \T(*,A)^{\loc} & := & (*,A,\Spec A)^{\loc} \\ & = & \Spec A .\ee  Theorem~\ref{thm:localization} yields an easy proof of the following result, which can be found in the Errata for [EGA I.1.8] printed at the end of [EGA II].

\begin{prop} \label{prop:affine} For $A \in \An$, $X \in \LRS$, the natural map \be \Hom_{\LRS}(X,\Spec A) & \to & \Hom_{\An}(A, \Gamma(X,\O_X)) \ee is bijective, so $\Spec : \An \to \LRS$ is left adjoint to $\Gamma : \LRS^{\rm op} \to \An$.   \end{prop}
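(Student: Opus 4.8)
The plan is to assemble the bijection from adjunctions that are already available, so that essentially no new computation is needed. First I would use Lemma~\ref{lem:important} to rewrite the target: for $A \in \An$ one has $\Spec A = (*,A,\T_{(*,A)})^{\loc} = \T(*,A)^{\loc}$, i.e.\ $\Spec A$ is the value at $(*,A) \in \RS$ of the functor $Z \mapsto (Z,\T_Z)^{\loc}$, which by Corollary~\ref{cor:Chevalley} is right adjoint to the inclusion $\LRS \into \RS$. (One could instead unfold Corollary~\ref{cor:Chevalley} into Theorem~\ref{thm:localization} composed with the adjunction between the forgetful functor $\PRS \to \RS$ and $\T$; either route works.) This yields, for $X \in \LRS$, a bijection $\Hom_{\LRS}(X,\Spec A) = \Hom_{\RS}(X,(*,A))$, natural in $X$ and $A$.

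Next I would invoke the adjunction recorded just before the statement: the functor $A \mapsto (*,A)$ from $\An$ to $\RS$ is left adjoint to $\Gamma : \RS^{\rm op} \to \An$, so there is a natural bijection $\Hom_{\RS}(X,(*,A)) = \Hom_{\An}(A,\Gamma(X,\O_X))$. Composing the two bijections gives a bijection $\Hom_{\LRS}(X,\Spec A) = \Hom_{\An}(A,\Gamma(X,\O_X))$ natural in both variables; formally this already exhibits $\Spec$ as left adjoint to $\Gamma$, and what remains is to check it agrees with the ``natural map'' named in the statement.

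That last check is the only point that is not purely formal, and I expect it to be the main (though routine) obstacle. Let $u : A \to \Gamma(\Spec A,\O_{\Spec A})$ denote the canonical ring homomorphism induced on global sections by the map $\pi^{\flat}$ in the construction $\Spec A = \T(*,A)^{\loc}$; the natural map of the statement takes $g : X \to \Spec A$ to the composite of $u$ with the map on global sections induced by $g^{\sharp}$. Now trace $g$ through the two steps above: the Corollary~\ref{cor:Chevalley} adjunction identifies $g$ with $\pi \circ g : X \to (*,A)$ (its counit at $(*,A)$ being the localization map $\pi : \Spec A \to (*,A)$), and the adjunction for $A \mapsto (*,A)$ then identifies $\pi \circ g$ with the ring homomorphism $A \to \Gamma(X,\O_X)$ underlying it. By functoriality this underlying homomorphism is the composite of $u$ (contributed by $\pi$) with the global-sections map of $g^{\sharp}$ --- exactly the natural map. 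Checking the naturality squares and this one compatibility is a short diagram chase, aided by the fact that on stalks every map in sight is a localization map (Proposition~\ref{prop:localization}), hence determined by the universal property of localization. So the natural map is bijective, which is the proposition; as a byproduct, $u$ turns out to be an isomorphism.
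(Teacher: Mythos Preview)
Your argument is correct and matches the paper's proof essentially line for line: the paper also writes $\Spec A = \T(*,A)^{\loc}$ and then chains the adjunctions
\[
\Hom_{\LRS}(X,\T(*,A)^{\loc}) = \Hom_{\PRS}((X,\M_X),\T(*,A)) = \Hom_{\RS}(X,(*,A)) = \Hom_{\An}(A,\Gamma(X,\O_X)),
\]
which is exactly your ``unfolded'' route through Theorem~\ref{thm:localization} and the $\T$-adjunction rather than via Corollary~\ref{cor:Chevalley}. The only difference is that you add a verification that the resulting bijection agrees with the stated ``natural map'', which the paper leaves implicit.
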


\begin{proof} This is a completely formal consequence of various adjunctions: \be \Hom_{\LRS}(X,\Spec A) & = & \Hom_{\LRS}(X,\T(*,A)^{\loc}) \\ & = & \Hom_{\PRS}((X,\M_X),\T(*,A)) \\ & = & \Hom_{\RS}(X, (*,A)) \\ & = & \Hom_{\An}(A,\Gamma(X,\O_X)). \ee  \end{proof}

\begin{thm} \label{thm:Schinverselimits} The category $\Sch$ has all finite inverse limits, and the inclusion $\Sch \to \LRS$ preserves them. \end{thm}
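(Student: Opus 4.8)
The plan is to reduce the statement to two special cases: the terminal object, and binary fibered products. A category with a terminal object and all fibered products has all finite inverse limits (binary products are fibered products over the terminal object, the equalizer of $f,g : A \to B$ is the fibered product of $(\Id_A,f)$ and $(\Id_A,g)$ against the diagonal of $B \times B$, and a general finite inverse limit is assembled from these), so since $\LRS$ has all inverse limits by Corollary~\ref{cor:inverselimits} and $\Sch$ is a \emph{full} subcategory of $\LRS$, it suffices to show that the terminal object of $\LRS$ is a scheme and that the fibered product in $\LRS$ of two morphisms of schemes over a scheme is again a scheme. Fullness guarantees that a limit computed in $\LRS$ of a diagram lying in $\Sch$, if it happens to land in $\Sch$, is automatically the limit in $\Sch$; this simultaneously shows $\Sch$ has the limit and that the inclusion preserves it.

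For the affine case I would invoke Proposition~\ref{prop:affine}, which gives $\Hom_{\LRS}(X,\Spec C) = \Hom_{\An}(C,\Gamma(X,\O_X))$, naturally in $X \in \LRS$ and $C \in \An$. Taking $C = \ZZ$ (the initial ring) shows $\Spec \ZZ$ is terminal in $\LRS$, and it is a scheme. Taking $C = B_1 \otimes_A B_2$ and unwinding the universal property of the tensor product identifies $\Hom_{\LRS}(X,\Spec(B_1\otimes_A B_2))$ with $\Hom_{\LRS}(X,\Spec B_1)\times_{\Hom_{\LRS}(X,\Spec A)}\Hom_{\LRS}(X,\Spec B_2)$, naturally in $X$; by Yoneda, $\Spec(B_1\otimes_A B_2) = \Spec B_1\times^{\LRS}_{\Spec A}\Spec B_2$. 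Thus finite inverse limits of affine schemes, formed in $\LRS$, are affine schemes.

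For a general fibered product, let $f_i : X_i \to Y$ ($i=1,2$) be morphisms of schemes and set $P := X_1\times^{\LRS}_Y X_2$, with localization map $\pi : P \to Q$ onto $Q := X_1\times^{\RS}_Y X_2$; by Theorem~\ref{thm:inverselimits}, $P = (Q,M)^{\loc}$, where $\u Q = \u X_1\times_{\u Y}\u X_2$, the stalks of $\O_Q$ are $\O_{X_1,x_1}\otimes_{\O_{Y,y}}\O_{X_2,x_2}$, and $M$ is the intersection of the pullbacks of the local prime systems. Around any point of $P$, lying over $(x_1,x_2)\in Q$ with common image $y\in Y$, I would pick an affine open $V=\Spec A\subseteq Y$ containing $y$ and affine opens $U_i=\Spec B_i\subseteq f_i^{-1}(V)$ containing $x_i$. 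The preimage of the open set $\u U_1\times\u U_2$ under the canonical continuous map $\u Q\to\u X_1\times\u X_2$ is $\u U_1\times_{\u Y}\u U_2$, and since $f_i(U_i)\subseteq V$ this equals $W:=\u U_1\times_{\u V}\u U_2$; so $W$ is open in $\u Q$, and the restrictions to $W$ of $\O_Q$ and $M$ are precisely the structure sheaf and prime system of $U_1\times^{\RS}_V U_2$, since both are computed from stalks and inverse-image sheaves and so are unaffected by passage to open subspaces. Hence, by Remark~\ref{rem:open}, $\pi^{-1}(W) = (U_1\times^{\RS}_V U_2,\,M|W)^{\loc} = U_1\times^{\LRS}_V U_2$, which by the affine case is $\Spec(B_1\otimes_A B_2)$ and is open in $P$. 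These open subschemes cover $P$, so the locally ringed space $P$ is a scheme, completing the argument.

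The only genuinely structural step is the last recognition via Remark~\ref{rem:open} that localization commutes with passing to open subspaces, which is what lets one see the local pieces of $P$ as fibered products of affines. I expect the main nuisance — though not a real obstacle — to be the bookkeeping identifying $U_1\times^{\RS}_V U_2$ with the open subspace $W\subseteq Q$ together with its inherited prime system: one chases the direct-limit description of $\O_Q$ and the intersection-of-pullbacks description of $M$ from Theorem~\ref{thm:inverselimits} and checks that both commute with restriction, which is routine because only stalks and inverse-image sheaves are involved.
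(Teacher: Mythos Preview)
Your proposal is correct and follows essentially the same strategy as the paper: reduce to a generating class of finite limits, handle the affine case via Proposition~\ref{prop:affine} and Yoneda, and then cover the general case by affine opens using Remark~\ref{rem:open}. The only difference is the choice of generating class: the paper reduces to finite products and equalizers, whereas you reduce to the terminal object and binary fibered products. Both reductions are standard and the technical core (the affine identification and the compatibility of localization with open restriction) is identical; your choice is arguably a bit more natural here since the fibered product is the case one actually cares about, while the paper's choice makes the ``finiteness'' hypothesis visibly necessary only in the product step.
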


\begin{proof} It is equivalent to show that, for a finite inverse limit system $i \mapsto X_i$ in $\Sch$, the inverse limit $X$ in $\LRS$ is a scheme.  It suffices to treat the case of (finite) products and equalizers.  For products, suppose $\{ X_i \}$ is a finite set of schemes and $X = \prod_i X_i$ is their product in $\LRS$.  We want to show $X$ is a scheme.  Let $\ov{x}$ be a point of $X$, and let $x = (x_i) \in \prod_i^{\RS} X_i$ be its image in the ringed space product.  Let $U_i = \Spec A_i$ be an open affine neighborhood of $x_i$ in $X_i$.  As we saw above, the map $X \to \prod_i^{\RS} X_i$ is a localization and, as mentioned in Remark~\ref{rem:open}, it follows that the product $U := \prod_i U_i$ of the $U_i$ in $\LRS$ is an open neighborhood of $\ov{x}$ in $X$,\footnote{This is the only place we need ``finite".  If $\{ X_i \}$ were infinite, the topological space product of the $U_i$ might not be open in the topology on the topological space product of the $X_i$ because the product topology only allows ``restriction in \emph{finitely many} coordinates".} so it remains only to prove that there is an isomorphism $U \cong \Spec \otimes_i A_i$, hence $U$ is affine.\footnote{There would not be a problem \emph{here} even if $\{ X_i \}$ were infinite: $\An$ has all direct and inverse limits, so the (possibly infinite) tensor product $\otimes_i A_i$ over $\ZZ$ (coproduct in $\An$) makes sense.  Our proof therefore shows that any inverse limit (not necessarily finite) of \emph{affine} schemes, taken in $\LRS$, is a scheme.}  Indeed, we can see immediately from Proposition~\ref{prop:affine} that $U$ and $\Spec \otimes_i A_i$ represent the same functor on $\LRS$: \be \Hom_{\LRS}(Y,U) & = & \prod_i \Hom_{\LRS}(Y,U_i) \\ & = & \prod_i \Hom_{\An}(A_i, \Gamma(Y,\O_Y)) \\ & = & \Hom_{\An}(\otimes_i A_i, \Gamma(Y,\O_Y)) \\ & = & \Hom_{\LRS}(Y, \Spec \otimes_i A_i). \ee  

The case of equalizers is similar: Suppose $X$ is the $\LRS$ equalizer of morphisms $f,g : Y \rightrightarrows Z$ of schemes, and $x \in X$.  Let $y \in Y$ be the image of $x$ in $Y$, so $f(y)=g(y)=:z$.  Since $Y,Z$ are schemes, we can find  affine neighborhoods $V = \Spec B$ of $y$ in $Y$ and $W = \Spec A$ of $z$ in $Z$ so that $f,g$ take $V$ into $W$.  As before, it is clear that the equalizer $U$ of $f|V, g|V : V \rightrightarrows W$ in $\LRS$ is an open neighborhood of $x \in X$, and we prove exactly as above that $U$ is affine by showing that it is isomorphic to $\Spec$ of the coequalizer \be C & = & B / \langle \{ f^{\sharp}(a) - g^{\sharp}(a) : a \in A \} \rangle \ee of $f^{\sharp}, g^{\sharp} : A \rightrightarrows B$ in $\An$. \end{proof}

\begin{rem} \label{rem:inverselimits}  The basic results concerning the existence of inverse limits in $\LRS$ and their coincidence with inverse limits in $\Sch$ are, at least to some extent, ``folk theorems".  I do not claim originality here.  The construction of fibered products in $\LRS$ can perhaps be attributed to Hanno Becker \cite{HB}, and the fact that a cartesian diagram in $\Sch$ is also cartesian in $\LRS$ is implicit in the [EGA] Erratum mentioned above. \end{rem}

\begin{rem} \label{rem:topoi}  It is unclear to me whether the 2-category of locally ringed topoi has 2-fibered products, though Hakim seems to want such a fibered product in [Hak V.3.2.3]. \end{rem}

\subsection{Fibered products} \label{section:fiberedproducts} In this section, we will more closely examine the construction of fibered products in $\LRS$ and explain the relationship between fibered products in $\LRS$ and those in $\RS$.  By Theorem~\ref{thm:Schinverselimits}, the inclusion $\Sch \into \LRS$ preserves inverse limits, so these results will generalize the basic results comparing fibered products in $\Sch$ to those in $\RS$ (the proofs will become much more transparent as well).

\begin{defn} \label{defn:S} Suppose $(A,\m,k), (B_1,\m_1,k_1), (B_2,\m_2,k_2) \in \LAn$ and $f_i : A \to B_i$ are $\LAn$ morphisms, so $f_i^{-1}(\m_i) = \m$ for $i=1,2$.  Let $i_j : B_j \to B_1 \otimes_A B_2$ ($j=1,2$) be the natural maps.  Set \bne{S} S(A,B_1,B_2) & := & \{ \p \in \Spec (B_1 \otimes_A B_2) : i_1^{-1}(\p)=\m_1, \; i_2^{-1}(\p) = \m_2 \} . \ene  Note that the kernel $K$ of the natural surjection \be B_1 \otimes_A B_2 & \to & k_1 \otimes_k k_2 \\ b_1 \otimes b_2 & \mapsto & [b_1] \otimes [b_2] \ee is generated by the expressions $m_1 \otimes 1$ and $1 \otimes m_2$, where $m_i \in \m_i$, so $$ \Spec (k_1 \otimes_k k_2) \into \Spec (B_1 \otimes_A B_2) $$ is an isomorphism onto $S(A,B_1,B_2)$.  In particular, \be S(A,B_1,B_2) & = & \{ \p \in \Spec (B_1 \otimes_A B_2) : K \subseteq \p \} \ee is closed in $\Spec (B_1 \otimes_A B_2)$. \end{defn}

The subset $S(A,B_1,B_2)$ enjoys the following important property: Suppose $g_i : (B_i,\m_i) \to (C,\n)$, $i=1,2$, are $\LAn$ morphisms with $g_1f_1 = g_2 f_2$ and $h = (f_1,f_2) : B_1 \otimes_A B_2 \to C$ is the induced map.  Then $h^{-1}(\n) \in S(A,B_1,B_2)$.  Conversely, every $\p \in S(A,B_1,B_2)$ arises in this manner: take $C = (B_1 \otimes_A B_2)_{\p}$.

\noindent {\bf Setup:}  We will work with the following setup throughout this section.  Let $f_1 : X_1 \to Y$, $f_2 : X_2 \to Y$ be morphisms in $\LRS$.  From the universal property of fiber products we get a natural  ``comparison" map \be \eta : X_1 \times^{\LRS}_Y X_2 & \to & X_1 \times^{\RS}_Y X_2 . \ee  Let $\pi_i : X_1 \times_Y^{\RS} X_2 \to X_i$ ($i=1,2$) denote the projections and let $g := f_1 \pi_1 = f_2 \pi_2$.  Recall that the structure sheaf of $X_1 \times_Y^{\RS} X_2$ is $\pi_1^{-1} \O_{X_1} \otimes_{g^{-1} \O_Y} \pi_2^{-1} \O_{X_2}$.  In particular, the stalk of this structure sheaf at a point $x = (x_1,x_2) \in X_1 \times_Y^{\RS} X_2$ is $\O_{X_1,x_1} \otimes_{\O_{Y,y}} \O_{X_2,x_2}$, where $$y := g(x) = f_1(x_1) = f_2(x_2).$$  In this situation, we set \be S(x_1,x_2) & := & S(\O_{Y,y}, \O_{X_1,x_1}, \O_{X_2,x_2}) \ee to save notation.

\begin{thm} \label{thm:comparison} The comparison map $\eta$ is surjective on topological spaces.  More precisely, for any $x = (x_1,x_2) \in X_1 \times_Y^{\RS} X_2$, $\eta^{-1}(x)$ is in bijective correspondence with the set $S(x_1,x_2)$, and in fact, there is an $\LRS$ isomorphism \be \eta^{-1}(x) & := & X_1 \times_Y^{\LRS} X_2 \times_{X_1 \times_Y^{\RS} X_2} (x, \O_{X_1,x_1} \otimes_{\O_{Y,y}} \O_{X_2,x_2}) \\ & = & \Spec_{ \O_{X_1,x_1} \otimes_{\O_{Y,y}} \O_{X_2,x_2} } S(x_1,x_2). \ee  In particular, $\eta^{-1}(x)$ is isomorphic as a \emph{topological space} to $\Spec k(x_1) \otimes_{k(y)} k(x_2)$ (but not as a ringed space).  The stalk of $\eta$ at $z \in S(x_1,x_2)$ is identified with the localization map \be  \O_{X,x_1} \otimes_{\O_{Y,y}} \O_{X,x_2}  & \to & ( \O_{X,x_1} \otimes_{\O_{Y,y}} \O_{X,x_2} )_z. \ee  In particular, $\eta$ is a localization morphism (Definition~\ref{defn:localizationmorphism}).  \end{thm}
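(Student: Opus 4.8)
The plan is to realize $X_1 \times_Y^{\LRS} X_2$ explicitly as a localization of the ringed space $X_1 \times_Y^{\RS} X_2$ and then read off all the claimed statements from Proposition~\ref{prop:localization} and Remark~\ref{rem:open}. Concretely, set $W := X_1 \times_Y^{\RS} X_2$, with projections $\pi_i : W \to X_i$ and $g := f_1\pi_1 = f_2\pi_2$, so the stalk of $\O_W$ at $x = (x_1,x_2)$ is $\O_{X_1,x_1} \otimes_{\O_{Y,y}} \O_{X_2,x_2}$. The functors $\M$ and $\T$ from the excerpt let me view $X_1,X_2,Y \in \LRS$ as objects of $\PRS$ via $\M$, and $W \in \RS$ as $(W,\T_W) \in \PRS$. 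First I would define a prime system $S$ on $W$ by $S_x := S(x_1,x_2) = S(\O_{Y,y},\O_{X_1,x_1},\O_{X_2,x_2}) \subseteq \Spec \O_{W,x}$ (Definition~\ref{defn:S}), and check that $\pi_1,\pi_2$ lift to $\PRS$ morphisms $(W,S) \to (X_i,\M_{X_i})$: this is exactly the condition $i_j^{-1}(\p) = \m_{x_j}$ built into the definition of $S(x_1,x_2)$, so it is essentially immediate, as is the compatibility $f_1\pi_1 = f_2\pi_2$ in $\PRS$.

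Next I would verify that $(W,S)$ is the fibered product of $(X_1,\M_{X_1}) \to (Y,\M_Y) \leftarrow (X_2,\M_{X_2})$ in $\PRS$. Given the description of inverse limits in $\PRS$ from Theorem~\ref{thm:inverselimits}, the underlying ringed space of this fibered product is $W$ (since the forgetful functor $\PRS \to \RS$ preserves inverse limits and $W$ is the $\RS$ fibered product), and its prime system is the intersection $\pi_1^*\M_{X_1} \cap \pi_2^*\M_{X_2}$, whose fiber at $x$ is $\{\p \in \Spec \O_{W,x} : \pi_{1,x}^{-1}(\p) = \m_{x_1},\ \pi_{2,x}^{-1}(\p) = \m_{x_2}\}$ — but $\pi_{i,x}$ is precisely the map $i_i : \O_{X_i,x_i} \to \O_{X_1,x_1}\otimes_{\O_{Y,y}}\O_{X_2,x_2}$, so this set is exactly $S(x_1,x_2)$. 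Hence $(W,S)$ is that $\PRS$ fibered product. Now apply the localization functor: since $\PRS \to \LRS$ preserves inverse limits (Theorem~\ref{thm:inverselimits}) and retracts $\M$ (Theorem~\ref{thm:localization}), we get $(W,S)^{\loc} = (X_1,\M_{X_1})^{\loc} \times_{(Y,\M_Y)^{\loc}} (X_2,\M_{X_2})^{\loc} = X_1 \times_Y^{\LRS} X_2$, and under this identification the localization map $\pi : (W,S)^{\loc} \to W$ is the comparison map $\eta$.

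With $\eta = \pi : (W,S)^{\loc} \to W$ established, everything else is a direct translation of Proposition~\ref{prop:localization}. Surjectivity of $\eta$ on topological spaces holds because $S_x = S(x_1,x_2)$ is nonempty: indeed, by the "important property" of $S(A,B_1,B_2)$ noted after Definition~\ref{defn:S}, localizing $\O_{W,x}$ at any $\p \in S_x$ produces a valid cospan, and $S_x \neq \emptyset$ since $k(x_1) \otimes_{k(y)} k(x_2)$ is a nonzero ring (it receives maps from the fields $k(x_1),k(x_2)$ over $k(y)$), so its spectrum — identified in Definition~\ref{defn:S} with $S(x_1,x_2)$ — is nonempty. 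For the fiber: Proposition~\ref{prop:localization} identifies $\pi^{-1}(x)$ in $\LRS$ with $\Spec_{\O_{W,x}} S_x = \Spec_{\O_{X_1,x_1}\otimes_{\O_{Y,y}}\O_{X_2,x_2}} S(x_1,x_2)$, and combining with Remark~\ref{rem:open} (compatibility of localization with restriction to the open — here, stalk — subspace) gives the asserted $\LRS$ isomorphism with $\eta^{-1}(x)$ defined as the indicated $\RS$-fiber. The identification in Definition~\ref{defn:S} of $\Spec(k(x_1)\otimes_{k(y)}k(x_2))$ with $S(x_1,x_2)$ as topological spaces gives the homeomorphism (with the warning that the sheaves differ, since the inverse-image structure sheaf is not the usual one). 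Finally, Proposition~\ref{prop:localization} says the stalk of $\pi$ at $z \in S_x$ is the localization $\O_{W,x} \to (\O_{W,x})_z$, which is the claimed map, and that $\pi$ — hence $\eta$ — is a localization morphism (Definition~\ref{defn:localizationmorphism}).

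The only step requiring genuine care is the identification $(W,S)^{\loc} \cong X_1 \times_Y^{\LRS} X_2$ with $\pi$ becoming $\eta$: one must check that the universal cone exhibiting $(W,S)$ as the $\PRS$ fibered product, after localization, is literally the cone $(\pi_i^{\loc})$ defining the $\LRS$ fibered product, and that the retraction $(X_i,\M_{X_i})^{\loc} = X_i$ is the identity (not merely an isomorphism) under the natural bookkeeping — all of which is formal from Theorem~\ref{thm:localization} but should be spelled out. I expect no real obstacle; the content is entirely in packaging Theorems~\ref{thm:localization} and~\ref{thm:inverselimits} together with Proposition~\ref{prop:localization}.
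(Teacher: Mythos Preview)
Your proposal is correct and follows essentially the same approach as the paper: identify $\eta$ with the localization map $\pi : (W,S)^{\loc} \to W$ (via Theorems~\ref{thm:inverselimits} and~\ref{thm:localization}) and then read everything off from Proposition~\ref{prop:localization}. One small quibble: your appeal to Remark~\ref{rem:open} for the fiber description is misplaced, since $\{x\}$ is not generally open in $W$; the fiber identification is already given directly by Proposition~\ref{prop:localization} (whose footnote defines $\pi^{-1}(x)$ exactly as the $\RS$-fiber in the theorem statement), so that remark is unnecessary here.
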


\begin{proof} We saw in \S\ref{section:inverselimits} that the comparison map $\eta$ is identified with the localization of $X_1 \times_Y^{\RS} X_2$ at the prime system $(x_1,x_2) \mapsto S(x_1,x_2)$, so these results follow from Proposition~\ref{prop:localization}. \end{proof}

\begin{rem} When $X_1,X_2,Y \in \Sch$, the first statement of Theorem~\ref{thm:comparison} is [EGA I.3.4.7]. \end{rem}

\begin{rem} The fact that $\eta$ is a localization morphism is often implicitly used in the theory of the cotangent complex. \end{rem}

\begin{defn} \label{defn:rational} Let $f : X \to Y$ be an $\LRS$ morphism.  A point $x \in X$ is called \emph{rational} over $Y$ (or ``over $y := f(x)$" or ``with respect to $f$") iff the map on residue fields $\ov{f}_x : k(y) \to k(x)$ is an isomorphism (equivalently: is surjective).   \end{defn}

\begin{cor} \label{cor:comparison1} Suppose $x_1 \in X_1$ is rational over $Y$ (i.e.\ with respect to $f_1 : X_1 \to Y$).  Then for any $x=(x_1,x_2) \in X_1 \times^{\RS}_Y X_2$, the fiber $\eta^{-1}(x)$ of the comparison map $\eta$ is punctual.  In particular, if \emph{every} point of $X_1$ is rational over $Y$, then $\eta$ is bijective.  \end{cor}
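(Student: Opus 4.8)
The plan is to reduce everything to the explicit description of $\eta^{-1}(x)$ furnished by Theorem~\ref{thm:comparison}, namely $\eta^{-1}(x) = \Spec_{\O_{X_1,x_1} \otimes_{\O_{Y,y}} \O_{X_2,x_2}} S(x_1,x_2)$, and in particular the fact that, as a topological space, $\eta^{-1}(x)$ is homeomorphic to $\Spec k(x_1) \otimes_{k(y)} k(x_2)$. So it suffices to show that when $x_1$ is rational over $Y$, the ring $k(x_1) \otimes_{k(y)} k(x_2)$ has exactly one prime ideal (in fact we will see it is $k(x_2)$ itself). First I would unwind the hypothesis: $x_1$ rational over $Y$ means the residue field map $\ov{f}_{1,x_1} : k(y) \to k(x_1)$ is an isomorphism. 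Hence $k(x_1) \otimes_{k(y)} k(x_2) = k(y) \otimes_{k(y)} k(x_2) = k(x_2)$, a field, which has a unique prime ideal. Therefore $S(x_1,x_2)$ is a singleton, so $\eta^{-1}(x)$ is a one-point space, i.e.\ punctual.

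For the final sentence, suppose every point of $X_1$ is rational over $Y$. By Theorem~\ref{thm:comparison}, $\eta$ is already surjective on topological spaces, so it remains only to check injectivity. But any point of $X_1 \times_Y^{\LRS} X_2$ lies in some fiber $\eta^{-1}(x)$ with $x = (x_1,x_2)$, and $x_1 \in X_1$ is rational over $Y$ by hypothesis, so by the first part $\eta^{-1}(x)$ is punctual; two points with the same image under $\eta$ therefore coincide. Hence $\eta$ is a bijection of topological spaces.

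I expect no serious obstacle here: the content is entirely carried by Theorem~\ref{thm:comparison}, and the only thing to verify by hand is the elementary algebraic fact that base change of the field $k(x_2)$ along the isomorphism $k(y) \xrightarrow{\sim} k(x_1)$ yields a field again, which is immediate. The only point worth a sentence of care is the equivalence ``isomorphism $\iff$ surjective'' built into Definition~\ref{defn:rational}: since $k(y) \to k(x_1)$ is automatically a (local, hence injective) homomorphism of fields, surjectivity is all that is needed, and that is exactly what forces $k(x_1) \otimes_{k(y)} k(x_2) \cong k(x_2)$.
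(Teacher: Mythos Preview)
Your proposal is correct and follows essentially the same route as the paper: both invoke Theorem~\ref{thm:comparison} to identify the fiber $\eta^{-1}(x)$ with $\Spec(k(x_1) \otimes_{k(y)} k(x_2))$, and then use rationality of $x_1$ to conclude this is $\Spec k(x_2)$, a single point. You spell out the ``In particular'' bijectivity claim (surjectivity from Theorem~\ref{thm:comparison} plus punctual fibers) more explicitly than the paper does, which is fine.
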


\begin{proof} Suppose $x_1 \in X_1$ is rational over $Y$.  Suppose $x = (x_1,x_2) \in X_1 \times^{\RS} X_2$.  Set $y := f_1(x_1)=f_2(x_2)$.  Since $x_1$ is rational, $k(y) \cong k(x_1)$, so $\Spec k(x_1) \otimes_{k(y)} k(x_2) \cong \Spec k(x_2)$ has a single element.  On the other hand, we saw in Definition~\ref{defn:S} that this set is in bijective correspondence with the set $$ S(x_1,x_2) \subseteq \Spec ( \O_{X_1,x_1} \otimes_{\O_{Y,y}} \O_{X_2,x_2} )$$ appearing in Theorem~\ref{thm:comparison}, so that same theorem says that $\eta^{-1}(x)$ consists of a single point.  \end{proof}

\begin{rem} Even if every $x_1 \in X_1$ is rational over $Y$, the comparison map $$\eta : X_1 \times_Y^{\LRS} X_2 \to X_1 \times_Y^{\RS} X_2$$ is not generally an isomorphism on topological spaces, even though it is bijective.  The topology on $X_1 \times_Y^{\LRS} X_2$ is generally much finer than the product topology.  In this situation, the set $S(x_1,x_2)$ always consists of a single element $z(x_1,x_2)$: namely, the maximal ideal of $\O_{X_1,x_1} \otimes_{\O_{Y,y}} \O_{X_2,x_2}$ given by the kernel of the natural surjection $$\O_{X_1,x_1} \otimes_{\O_{Y,y}} \O_{X_2,x_2} \to k(x_1) \otimes_{k(y)} k(x_2) = k(x_2).$$  If we identify $X_1 \times_Y^{\LRS} X_2$ and $X_1 \times_Y^{\RS} X_2$ as sets via $\eta$, then the ``finer" topology has basic open sets \be  \U(U_1 \times_Y U_2,s) := \{ (x_1,x_2) \in U_1 \times_Y U_2 : s_{(x_1,x_2)} \notin z(x_1,x_2) \} \ee as $U_1,U_2$ range over open subsets of $X_1,X_2$ and $s$ ranges over $$(\pi_1^{-1}\O_{X_1} \otimes_{g^{-1}\O_Y} \pi_2^{-1} \O_{X_2})(U_1 \times_Y U_2).$$  This set is not generally open in the product topology because the stalks of $$\pi_1^{-1}\O_{X_1} \otimes_{g^{-1}\O_Y} \pi_2^{-1} \O_{X_2}$$ are not generally local rings, so not being in $z(x_1,x_2)$ does not imply invertibility, hence is not generally an open condition on $(x_1,x_2)$. \end{rem}

\begin{rem} On the other hand, sometimes the topologies on $X_1$, $X_2$ are so fine that the sets $\U(U_1 \times_Y U_2,s)$ \emph{are} easily seen to be open in the product topology.  For example, suppose $k$ is a topological field.\footnote{I require all finite subsets of $k$ to be closed in the definition of ``topological field".}  Then one often works in the full subcategory $\C$ of locally ringed spaces over $k$ consisting of those $X \in \LRS / k$ satisfying the conditions: \begin{enumerate} \item \label{an1} Every point $x \in X$ is a $k$ point: the composition $k \to \O_{X,x} \to k(x) $ yields an isomorphism $k=k(x)$ for every $x \in X$.  \item  The structure sheaf $\O_{X}$ is \emph{continuous for the topology on} $k$ in the sense that, for every $(U,s) \in \Sec \O_{X}$, the function \be s( \slot ) : U & \to & k \\ x & \mapsto & s(x) \ee is a continuous function on $U$.  Here $s(x) \in k(x)$ denotes the image of the stalk $s_x \in \O_{X,x}$ in the residue field $k(x) = \O_{X,x} / \m_x$, and we make the identification $k=k(x)$ using \eqref{an1}. \end{enumerate}  One can show that fiber products in $\C$ are the same as those in $\LRS$ and that the forgetful functor $\C \to \Top$ preserves fibered products (even though $\C \to \RS$ may not).  Indeed, given $s \in (\pi_1^{-1}\O_{X_1} \otimes_{g^{-1}\O_Y} \pi_2^{-1} \O_{X_2})(U_1 \times_Y U_2)$, the set $\U(U_1 \times_Y U_2,s)$ is the preimage of $k^* \subseteq k$ under the map $s( \slot )$, and we can see that $s( \slot )$ is continuous as follows:  By viewing the sheaf theoretic tensor product as the sheafification of the presheaf tensor product we see that, for any point $(x_1,x_2) \in \U_1 \times_Y U_2$, we can find a neighborhood $V_1 \times_Y V_2$ of $(x_1,x_2)$ contained in $U_1 \times_Y U_2$ and sections $a_1, \dots, a_n \in \O_{X_1}(V_1)$, $b_1,\dots,b_n \in \O_{X_2}(V_2)$ such that the stalk $s_{x_1',x_2'}$ agrees with $\sum_i (a_i)_{x_1'} \otimes (b_i)_{x_2'}$ at each $(x_1',x_2') \in V_1 \times_Y V_2$.  In particular, the function $s( \slot )$ agrees with the function $$(x_1',x_2') \mapsto \sum_i a_i(x_1')b_i(x_2') \in k$$ on $V_1 \times_Y V_2$.  Since this latter function is continuous in the product topology on $V_1 \times_Y V_2$ (because each $a_i( \slot )$, $b_i( \slot)$ is continuous) and continuity is local, $s( \slot )$ is continuous.    \end{rem}

\begin{cor} \label{cor:comparison2} Suppose $(f_1)_{x_1} : \O_{Y, f(x_1)} \to \O_{X_1,x_1}$ is surjective for every $x_1 \in X_1$.  Then the comparison map $\eta$ is an isomorphism.  In particular, $\eta$ is an isomorphism under either of the following hypotheses:  \begin{enumerate} \item \label{immersion} $f_1$ is an immersion. \item \label{point} $f_1 : \Spec k(y) \to Y$ is the natural map associated to a point $y \in Y$. \end{enumerate} \end{cor}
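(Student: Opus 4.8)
The plan is to read the result off from Theorem~\ref{thm:comparison}, which identifies $\eta$ with the localization map $\pi : (W,S)^{\loc} \to W$, where $W := X_1 \times_Y^{\RS} X_2$ and $S$ is the prime system $(x_1,x_2) \mapsto S(x_1,x_2)$. Since the localization functor retracts $\M$ (Theorem~\ref{thm:localization}), i.e.\ $\pi : (Z,\M_Z)^{\loc} \to Z$ is an isomorphism for every $Z \in \LRS$, it suffices to prove that, under the stated hypothesis, $W$ is itself a locally ringed space and $S = \M_W$.

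First I would check that $W \in \LRS$, i.e.\ that every stalk of $\O_W$ is local. Fix $x = (x_1,x_2) \in W$, put $y := f_1(x_1) = f_2(x_2)$ and abbreviate $A := \O_{Y,y}$, $B_i := \O_{X_i,x_i}$, so that $\O_{W,x} = B_1 \otimes_A B_2$. By hypothesis $(f_1)_{x_1} : A \to B_1$ is surjective; writing $I := \ker (f_1)_{x_1}$ we get $B_1 \otimes_A B_2 = B_2/IB_2$, with $i_2 : B_2 \to B_1 \otimes_A B_2$ becoming the quotient map. As $(f_1)_{x_1}$ and $(f_2)_{x_2}$ are local homomorphisms, $I \subseteq \m_y$, hence $IB_2 \subseteq \m_{x_2}$, so $B_2/IB_2$ is a nonzero quotient of the local ring $B_2$, hence local, with maximal ideal $\m_x := \m_{x_2}/IB_2$. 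Every point of $X_1$ satisfies the hypothesis, so every stalk of $\O_W$ is local and $W \in \LRS$.

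Next I would show $S(x_1,x_2) = \{\m_x\}$. By Definition~\ref{defn:S}, $S(x_1,x_2)$ is the set of primes of $B_1 \otimes_A B_2$ containing the kernel $K$ of the natural surjection $B_1 \otimes_A B_2 \to k(x_1) \otimes_{k(y)} k(x_2)$. Surjectivity of $A \to B_1$ together with locality of $(f_1)_{x_1}$ forces the residue map $k(y) \to k(x_1)$ to be an isomorphism, so $k(x_1) \otimes_{k(y)} k(x_2) = k(x_2)$ and the surjection in question is just $B_2/IB_2 \to B_2/\m_{x_2} = k(x_2)$; thus $K = \m_x$. Hence $S(x_1,x_2) = \{\m_x\} = \M_{W,x}$, so $S = \M_W$, and $\eta = \pi : (W,\M_W)^{\loc} \to W$ is an isomorphism by Theorem~\ref{thm:localization}, proving the main assertion.

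For the two special cases it is enough to observe that $(f_1)_{x_1}$ is surjective for every $x_1 \in X_1$. An immersion is a composite of a closed immersion, which is surjective on stalks by definition, and an open immersion, which induces isomorphisms on stalks; hence it is surjective on stalks. And if $f_1 : \Spec k(y) \to Y$ is the natural map associated to a point $y \in Y$, the single stalk map is the canonical projection $\O_{Y,y} \to k(y)$, which is surjective. The argument uses nothing beyond Theorems~\ref{thm:localization} and~\ref{thm:comparison}; the only mildly delicate step---and really the heart of the matter---is the elementary algebraic fact that forming $B_1 \otimes_A B_2$ along a surjection $A \to B_1$ yields a quotient of $B_2$, which is precisely what makes $W$ locally ringed and collapses each $S(x_1,x_2)$ to the maximal ideal of $\O_{W,x}$.
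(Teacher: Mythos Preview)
Your proof is correct and follows essentially the same route as the paper's: both reduce to showing that the ringed-space product $W = X_1 \times_Y^{\RS} X_2$ is already locally ringed, using that $B_1 \otimes_A B_2 \cong B_2/IB_2$ when $A \to B_1$ is surjective. The only cosmetic difference is in how the conclusion is drawn: the paper observes that once $W \in \LRS$ with local projections it already satisfies the $\LRS$ universal property (so $\eta$ is an isomorphism), whereas you phrase the same fact through the localization machinery by checking $S = \M_W$ and invoking Theorem~\ref{thm:localization}; your identity $S(x_1,x_2) = \{\m_x\}$ is exactly the statement that both $(\pi_i)_x$ are local.
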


\begin{proof}  It is equivalent to show that $X := X_1 \times_Y^{\RS} X_2$ is in $\LRS$ and the structure maps $\pi_i : X \to X_i$ are $\LRS$ morphisms.  Say $x=(x_1,x_2) \in X$ and let $y := f_1(x_1)=f_2(x_2)$.  By construction of $X$, we have a pushout diagram of rings $$ \xym@C+15pt{ \O_{Y,y} \ar[r]^-{(f_1)_{x_1}} \ar[d]_{(f_2)_{x_2}} & \O_{X_1,x_1} \ar[d]^{(\pi_1)_x} \\ \O_{X_2,x_2} \ar[r]^-{(\pi_2)_x} & \O_{X,x} } $$ hence it is clear from surjectivity of $(f_1)_{x_1}$ and locality of $(f_2)_{x_2}$ that $\O_{X,x}$ is local and $(\pi_1)_x, (\pi_2)_x$ are $\LAn$ morphisms. \end{proof}

\begin{cor} \label{cor:comparison3} Suppose $$ \xym{ X_1 \times_Y X_2 \ar[r]^-{\pi_2} \ar[d]_{\pi_1} & X_2 \ar[d]^{f_2} \\ X_1 \ar[r]^{f_1} & Y } $$ is a cartesian diagram in $\LRS$.  Then: \begin{enumerate} \item \label{rationaletafiber} If $z \in X_1 \times_Y X_2$ is rational over $Y$, then $\eta^{-1}(\eta(z)) = \{ z \}$. \item \label{rationalbasechange} Let $(x_1,x_2) \in X_1 \times^{\RS}_Y X_2$, and let $y := \pi_1(x_1) = \pi_2(x_2).$   Suppose $k(x_2)$ is isomorphic, as a field extesion of $k(y)$, to a subfield of $k(x_1)$.  Then there is a point $z \in  X_1 \times_Y^{\Sch} X_2$ rational over $X_1$ with $\pi_i(z)=x_i$, $i=1,2$. \end{enumerate} \end{cor}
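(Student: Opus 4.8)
The plan is to read both assertions off of Theorem~\ref{thm:comparison}. Recall that that theorem identifies the fiber $\eta^{-1}(x_1,x_2)$ with $\Spec_{\O_{X_1,x_1}\otimes_{\O_{Y,y}}\O_{X_2,x_2}} S(x_1,x_2)$, identifies $S(x_1,x_2)$ with $\Spec k(x_1)\otimes_{k(y)} k(x_2)$ via Definition~\ref{defn:S}, and shows that the stalk of $\eta$ at the point of the fiber corresponding to $\mathfrak z\in S(x_1,x_2)$ is the localization of $\O_{X_1,x_1}\otimes_{\O_{Y,y}}\O_{X_2,x_2}$ at $\mathfrak z$; in particular the residue field $k(z)$ of that point is the residue field of $k(x_1)\otimes_{k(y)} k(x_2)$ at the corresponding prime. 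I will also use that, the square being cartesian in $\LRS$, each projection $\pi_i:X_1\times_Y X_2\to X_i$ is a morphism in $\LRS$, so its stalk at a point $z$ is local and induces a map on residue fields $k(x_i)\to k(z)$ which lies over $k(y)$ because $f_i\pi_i$ is the structure morphism to $Y$.

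For part~(\ref{rationaletafiber}) I would argue as follows. Put $(x_1,x_2):=\eta(z)$ and let $y$ be the image of $z$ in $Y$. The composite $k(y)\to k(x_i)\to k(z)$ is the residue-field map induced by the structure morphism $X_1\times_Y X_2\to Y$, and this is an isomorphism since $z$ is rational over $Y$. The first arrow $k(y)\to k(x_i)$ is an injective homomorphism of fields, the second $k(x_i)\to k(z)$ likewise, and their composite is bijective, so both arrows are bijective; hence each $x_i$ is rational over $Y$. Then $k(x_1)\otimes_{k(y)} k(x_2)\cong k(y)$, which has a single prime, so by Theorem~\ref{thm:comparison} the fiber $\eta^{-1}(x_1,x_2)$ is a one-point space. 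Since $z$ belongs to it, $\eta^{-1}(\eta(z))=\{z\}$.

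For part~(\ref{rationalbasechange}) the idea is to construct the required prime directly. Using the hypothesis, fix a $k(y)$-algebra embedding $\phi:k(x_2)\into k(x_1)$ and consider the $k(x_1)$-algebra homomorphism $k(x_1)\otimes_{k(y)} k(x_2)\to k(x_1)$, $a\otimes b\mapsto a\,\phi(b)$; it is surjective since $a\otimes 1\mapsto a$, so its kernel is a prime ideal. Transporting this prime through Definition~\ref{defn:S} and Theorem~\ref{thm:comparison} produces a point $z\in X_1\times_Y X_2$ with $\pi_i(z)=x_i$ for $i=1,2$ and $k(z)=k(x_1)$, and the residue-field map $k(x_1)\to k(z)$ induced by $\pi_1$ equals the composite $k(x_1)\to k(x_1)\otimes_{k(y)} k(x_2)\to k(x_1)$, i.e.\ $a\mapsto a\otimes 1\mapsto a$, the identity. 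Thus $z$ is rational over $X_1$, as required. (When $X_1,X_2,Y$ are schemes, $X_1\times_Y X_2$ is the scheme fibered product by Theorem~\ref{thm:Schinverselimits}, which reconciles this with the stated $X_1\times_Y^{\Sch}X_2$.)

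I do not expect a genuine obstacle here: both parts are formal consequences of the stalk description in Theorem~\ref{thm:comparison} together with elementary facts about tensor products of fields over a common subfield. The only step that requires any care — and really the only content — is the bookkeeping in part~(\ref{rationaletafiber}): one must verify that the residue-field maps $k(x_i)\to k(z)$ sit over $k(y)$, since it is precisely this that allows rationality of $z$ over $Y$ to force $x_1$ and $x_2$ to be rational over $Y$.
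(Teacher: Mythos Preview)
Your argument is correct. Part~(\ref{rationaletafiber}) matches the paper's proof essentially verbatim: both factor $k(y)\to k(z)$ through $k(x_i)$, conclude that all maps in the residue-field square are isomorphisms, and then invoke Theorem~\ref{thm:comparison} to see that the fiber is $\Spec$ of a field.

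For part~(\ref{rationalbasechange}) you take a somewhat different route than the paper. The paper works categorically: it uses the universal property of the $\LRS$ fibered product to produce a morphism $\Spec k(x_1)\to X_1\times_Y X_2$ from the pair of maps $x_1:\Spec k(x_1)\to X_1$ and $(x_2,i):\Spec k(x_1)\to X_2$, lets $z$ be the image point, and reads off rationality from the resulting commutative diagram of residue fields. You instead work inside the explicit description furnished by Theorem~\ref{thm:comparison}: you write down the maximal ideal of $k(x_1)\otimes_{k(y)}k(x_2)$ cut out by $a\otimes b\mapsto a\,\phi(b)$, transport it to a point of $S(x_1,x_2)$, and then compute the residue-field map of $\pi_1$ directly as $a\mapsto a\otimes 1\mapsto a$. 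Your approach is more hands-on and avoids invoking the universal property (and the small unpacking needed to see what a map $\Spec k(x_1)\to X_2$ hitting $x_2$ looks like); the paper's approach is cleaner in that it never names a prime ideal. Both are short and either is perfectly acceptable here.
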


\begin{proof} For \eqref{rationaletafiber}, set $(x_1,x_2) := \eta(z)$, $y := \pi_1(x_1) = \pi_2(x_2)$.  Then we have a commutative diagram $$ \xym{ k(z) & \ar[l]_-{\ov{\pi}_{2,z}} k(x_2) \\ k(x_1) \ar[u]^{\ov{\pi}_{1,z}} & \ar[l]_-{\ov{f}_{1, x_1}} k(y) \ar[u]_-{\ov{f}_{2,x_2}} } $$ of residue fields.  By hypothesis, the compositions $k(y) \to k(x_i) \to k(z)$ are isomorphisms for $i=1,2$, so it must be that every map in this diagram is an isomorphism, hence the diagram is a pushout.  On the other hand, according to the first statement of Theorem~\ref{thm:comparison}, $\eta^{-1}(\eta(z))$ is in bijective correspondence with $$\Spec (k(x_1) \otimes_{k(y)} k(x_2)) = \Spec k(z),$$ which is punctual.

For \eqref{rationalbasechange}, let $i : k(x_2) \into k(x_1)$ be the hypothesized morphism of field extensions of $k(y)$.  By the universal property of the $\LRS$ fibered product $X_1 \times_Y X_2$, the maps \be (x_2,i): \Spec k(x_1) & \to & X_2 \\ x_1 : \Spec k(x_1) & \to & X_1 \ee give rise to a map $$ g : \Spec k(x_1) \to X_1 \times_Y X_2. $$  Let $z \in X_1 \times_Y X_2$ be the point corresponding to this map.  Then we have a commutative diagram of residue fields $$ \xym{ k(x_1) \\ & k(z) \ar[lu] & k(x_2) \ar[l] \ar[llu]_i \\ & k(x_1) \ar@{=}[luu] \ar[u]_{\ov{\pi}_{1,z}} & k(y) \ar[l] \ar[u]  }$$ so $\ov{\pi}_{1,z} : k(x_1) \to k(z)$ must be an isomorphism. \end{proof}

\subsection{Spec functor}  \label{section:relativespec} Suppose $X \in \LRS$ and $f : \O_X \to A$ is an $\O_X$ algebra.  Then $f$ may be viewed as a morphism of ringed spaces $f : (X,A) \to (X,\O_X)=X$.  Give $X$ the local prime system $\M_X$ as usual and $(X,A)$ the inverse image prime system $f^*\M_X$ (Remark~\ref{rem:inverseimage}), so $f$ may be viewed as a $\PRS$ morphism $$f : (X,A,f^* \M_X) \to (X,\O_X, \M_X).$$  Explicitly: \be (f^* \M_X)_x & = & \{ \p \in A_x : f_x^{-1}(\p) = \m_x \subseteq \O_{X,x} \}. \ee  By Theorem~\ref{thm:localization}, there is a unique $\LRS$ morphism $$\ov{f} : (X,A,f^* \M_X)^{\loc} \to (X,\O_X,\M_X)^{\loc}=X$$ lifting $f$ to the localizations.  We call \be \Spec_X A & := & (X,A,f^* \M_X)^{\loc} \ee the \emph{spectrum} (relative to $X$) of $A$.  $\Spec_X$ defines a functor \be \Spec_X : (\O_X / \An(X))^{\rm op} & \to & \LRS / X . \ee  Note that $ \Spec_X \O_X  =  (X,\O_X,\M_X)^{\loc}  =   X$ by Theorem~\ref{thm:localization}.

Our functor $\Spec_X$ agrees with the usual one (c.f.\ [Har II.Ex.5.17]) on their common domain of definition:

\begin{lem} Let $f : X \to Y$ be an affine morphism of schemes.  Then $\Spec_X f_* \O_X$ (as defined above) is naturally isomorphic to $X$ in $\LRS/Y$. \end{lem}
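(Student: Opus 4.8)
The plan is to establish a relative version of Proposition~\ref{prop:affine} --- a universal property of $\Spec_Y\mathcal A$ --- and then conclude by Yoneda after localizing on $Y$ and reducing to the affine case. (I write $\Spec_Y$, since $f_*\O_X$ is an $\O_Y$-algebra.)

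First I would record the following universal property: for $Y\in\LRS$, an $\O_Y$-algebra $g:\O_Y\to\mathcal A$, and any $p:Z\to Y$ in $\LRS$, there is a bijection natural in $(Z,p)$,
\[
\Hom_{\LRS/Y}(Z,\Spec_Y\mathcal A)\;=\;\Hom_{\O_Y\text{-alg}}(\mathcal A,p_*\O_Z).
\]
This follows formally from Theorem~\ref{thm:localization}. Unwinding, an $\LRS$ morphism $Z\to\Spec_Y\mathcal A=(Y,\mathcal A,g^*\M_Y)^{\loc}$ is the same as a $\PRS$ morphism $(Z,\M_Z)\to(Y,\mathcal A,g^*\M_Y)$, i.e.\ an $\RS$ morphism $\phi:Z\to(Y,\mathcal A)$ with $\phi_z^{-1}(\m_z)\in(g^*\M_Y)_{\phi(z)}$ for every $z\in Z$. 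Since $\bar g:\Spec_Y\mathcal A\to Y$ is the localization of $g$ and the localization counit $\pi$ is natural, requiring the morphism to lie over $Y$ amounts to requiring $g\circ\phi=p$ as $\RS$ morphisms $Z\to Y$; this pins the underlying continuous map of $\phi$ to that of $p$, and then the prime--system condition says precisely that the composite stalk $\O_{Y,p(z)}\to\mathcal A_{p(z)}\to\O_{Z,z}$, which is now $p^{\sharp}_z$, is local --- automatic because $p\in\LRS$. What survives of $\phi$ is a map of sheaves of rings $p^{-1}\mathcal A\to\O_Z$ whose composite with $p^{-1}g:p^{-1}\O_Y\to p^{-1}\mathcal A$ is $p^{\sharp}$, and $(p^{-1},p_*)$-adjunction turns this into an $\O_Y$-algebra map $\mathcal A\to p_*\O_Z$. (For $\mathcal A=\O_Y$ this recovers $\Spec_Y\O_Y=Y$; for $Y$ punctual, Proposition~\ref{prop:affine}.)

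Next I would take $\mathcal A=f_*\O_X$. Pushforward along a $Y$-morphism $h:Z\to X$ gives an $\O_Y$-algebra map $f_*h^{\sharp}:f_*\O_X\to(fh)_*\O_Z=p_*\O_Z$, hence a map $\Hom_{\LRS/Y}(Z,X)\to\Hom_{\O_Y\text{-alg}}(f_*\O_X,p_*\O_Z)$ natural in $(Z,p)$; the right-hand side is the functor represented by $\Spec_Y f_*\O_X$, so by Yoneda this comes from a canonical $Y$-morphism $\theta:X\to\Spec_Y f_*\O_X$, the one corresponding to $\mathrm{id}_{f_*\O_X}$. It suffices to show $\theta$ is an isomorphism, and this can be checked over an open cover $\{V\}$ of $Y$: writing $\bar g:\Spec_Y f_*\O_X\to Y$, one has $\theta^{-1}\bar g^{-1}(V)=f^{-1}(V)$, the sets $\bar g^{-1}(V)$ cover the target, and the $f^{-1}(V)$ cover $X$. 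By Remark~\ref{rem:open}, $\bar g^{-1}(V)=\Spec_V\bigl((f_*\O_X)|_V\bigr)$ over $V$. Since $f$ is affine, taking $V=\Spec R$ gives $f^{-1}(V)=\Spec B$ with $B:=\Gamma(f^{-1}V,\O_X)$ and $(f_*\O_X)|_V=\widetilde B$ (the quasi-coherent $\O_V$-algebra attached to $B$); and by naturality of the bijection above with respect to restriction along $V\hookrightarrow Y$, together with $(f|_{f^{-1}V})_*(\O_X|_{f^{-1}V})=(f_*\O_X)|_V$, the restriction $\theta|_{f^{-1}V}:\Spec B\to\Spec_V\widetilde B$ is again the canonical morphism, the one corresponding to $\mathrm{id}_{\widetilde B}$.

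Finally, it remains to see that the canonical $V$-morphism $\Spec B\to\Spec_V\widetilde B$ is an isomorphism, with $V=\Spec R$. Both objects represent the same functor on $\LRS/V$: the universal property above gives $\Hom_{\LRS/V}(Z,\Spec_V\widetilde B)=\Hom_{\O_V\text{-alg}}(\widetilde B,p_*\O_Z)$, which the adjunction $\widetilde{(-)}\dashv\Gamma(V,-)$ between $R$-algebras and $\O_V$-algebras rewrites as $\Hom_{R\text{-alg}}(B,\Gamma(Z,\O_Z))$; and Proposition~\ref{prop:affine}, together with the fact that lying over $\Spec R$ forces $R$-linearity of the induced ring map, gives $\Hom_{\LRS/V}(Z,\Spec B)=\Hom_{R\text{-alg}}(B,\Gamma(Z,\O_Z))$ as well. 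The canonical morphism corresponds to $\mathrm{id}_B$ under both identifications, so it is the Yoneda isomorphism; since $\widetilde B=(f_*\O_X)|_V$ and $\Spec B=f^{-1}V$, $\theta$ is an isomorphism over each $V$, hence an isomorphism, and it is natural in $(X\xrightarrow{f}Y)$ by construction. The only genuinely fiddly point --- and the main obstacle --- is the bookkeeping of the previous paragraph: checking that the globally defined $\theta$ restricts over an affine $V$ to exactly the canonical map $\Spec B\to\Spec_V\widetilde B$. This requires lining up, all at once, the compatibility with restriction along $V\hookrightarrow Y$ of the functor $\Spec_Y(-)$ (Remark~\ref{rem:open}), of the natural bijection of the universal property, and of the affineness identities $f^{-1}(\Spec R)=\Spec\Gamma(f^{-1}(\Spec R),\O_X)$ and $(f_*\O_X)|_{\Spec R}=\widetilde{\Gamma(f^{-1}(\Spec R),\O_X)}$ --- each routine, but needed simultaneously.
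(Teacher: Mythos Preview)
Your proof is correct and takes a genuinely different route from the paper's. The paper works entirely inside the $\PRS$ formalism: it reduces immediately to $Y=\Spec A$, $X=\Spec B$, writes down a two-square cartesian diagram in $\PRS$ linking $(Y,f_*\O_X,(f^\flat)^*\M_Y)$ through $(Y,\underline{B}_Y,\underline{f^\sharp}_Y^{\,*}N)$ to $(*,B,\Spec B)$ (and similarly on the $A$-side), invokes Lemma~\ref{lem:important} to see that the right column localizes to isomorphisms, and then uses Theorem~\ref{thm:inverselimits} (localization preserves $\PRS$ inverse limits) to conclude that the left column does too. No universal property of $\Spec_Y$ is invoked.

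Your approach instead \emph{proves} the universal property
\[
\Hom_{\LRS/Y}(Z,\Spec_Y\mathcal A)\;=\;\Hom_{\O_Y\text{-alg}}(\mathcal A,p_*\O_Z)
\]
directly from Theorem~\ref{thm:localization} (this is exactly condition~\eqref{RA4} in \S3.6, which the paper lists but does not verify), then builds the comparison map $\theta$ by Yoneda and checks it affine-locally using the standard adjunction $\widetilde{(\,\cdot\,)}\dashv\Gamma(V,-)$ together with Proposition~\ref{prop:affine}. Each approach has its merits: the paper's is a slick showcase of the $\PRS$ machinery and Lemma~\ref{lem:important}, while yours yields the representability statement for $\Spec_Y$ as a byproduct and is closer in spirit to how one usually compares relative Spec with the classical one. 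The ``fiddly'' restriction bookkeeping you flag is indeed routine, and your outline of it is adequate.
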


\begin{proof} This is local on $Y$, so we can assume $Y = \Spec A$ is affine, and hence $X = \Spec B$ is also affine, and $f$ corresponds to a ring map $f^\sharp : A \to B$.  Then \be f_* \O_X & = & B^{\sim} \\ & = & \underline{B}_Y \otimes_{ \underline{A}_Y } \O_Y, \ee as $\O_Y$ algebras, and the squares in the diagram $$ \xym{ (Y,f_* \O_X, (f^\flat)^* \M_Y) \ar[r] \ar[d] & (Y,\O_Y,\M_Y)  \ar[d] \\ (Y,\underline{B}_Y, \underline{f^{\sharp}}_Y^* N ) \ar[r] \ar[d] & (Y,\underline{A}_Y, N) \ar[d] \\ (*,B,\Spec B) \ar[r] & (*,A,\Spec A) } $$ in $\PRS$ are cartesian in $\PRS$, where $N$ is the prime system on $(Y,\underline{A}_Y)$ given by $N_y = \{ y \}$ discussed in Lemma~\ref{lem:important}.  According to that lemma, the right vertical arrows become isomorphisms upon localizing, and according to Theorem~\ref{thm:inverselimits}, the diagram stays cartesian upon localizing, so the left vertical arrows also become isomorphisms upon localizing, hence \be \Spec_Y f_* \O_X & := & (Y, f_*\O_X, (f^{\flat})^* \M_Y)^{\loc} \\ & = & \Spec B \\ & = & X. \ee \end{proof}

\begin{rem} Hakim [Hak IV.1] defines a ``Spec functor" from ringed topoi to locally ringed topoi, but it is not the same as ours on the common domain of definition.  There is no meaningful situation in which Hakim's Spec functor agrees with the ``usual" one.  When $X$ ``is" a locally ringed space, Hakim's $\Spec X$ ``is" (up to replacing a locally ringed space with the corresponding locally ringed topos) our $(X,\T_X)^{\loc}$.  As mentioned in Remark~\ref{rem:localization}, Hakim's theory of localization is only developed for the terminal prime system, which can be a bit awkward at times.  For example, if $X$ is a locally ringed space at least one of whose local rings has positive Krull dimension, Hakim's sequence of spectra yields an infinite strictly descending sequence of $\RS$ morphisms $$ \cdots \to \Spec (\Spec X) \to \Spec X \to X.$$ \end{rem}

The next results show that $\Spec_X$ takes direct limits of $\O_X$ algebras to inverse limits in $\LRS$ and that $\Spec_X$ is compatible with changing the base $X$.

\begin{lem} The functor $\Spec_X$ preserves inverse limits. \end{lem}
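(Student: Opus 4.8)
The plan is to factor $\Spec_X$ as a composite of two functors, each of which is a right adjoint, and hence preserves inverse limits.

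First I would make the factorization precise. An $\O_X$-algebra $f_A : \O_X \to A$ is simultaneously an $\RS$ morphism $f_A : (X,A) \to (X,\O_X)$ and, as observed when $\Spec_X$ was defined, a $\PRS$ morphism $f_A : (X,A,f_A^*\M_X) \to (X,\O_X,\M_X)$; this is visibly functorial in $A$, giving a functor
\[ \Psi : (\O_X/\An(X))^{\rm op} \to \PRS/(X,\O_X,\M_X), \qquad A \mapsto \bigl[\, (X,A,f_A^*\M_X) \xrightarrow{f_A} (X,\O_X,\M_X) \,\bigr]. \]
Since localization $L : \PRS \to \LRS$ carries the structure map of $\Psi(A)$ to $\ov{f_A} : \Spec_X A = (X,A,f_A^*\M_X)^{\loc} \to (X,\O_X,\M_X)^{\loc} = X$, it induces a functor $L_* : \PRS/(X,\O_X,\M_X) \to \LRS/X$; and comparing with the definition of $\Spec_X$ on objects and on morphisms (where $\Spec_X$ of an $\O_X$-algebra map is by construction the localization of the induced $\PRS$ morphism) one sees $\Spec_X = L_* \circ \Psi$ on the nose. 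So it suffices to show $\Psi$ and $L_*$ each preserve inverse limits.

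For $L_*$: by Theorem~\ref{thm:localization}, $L$ is right adjoint to $\M : \LRS \to \PRS$, the unit of this adjunction is an isomorphism (localization retracts $\M$) and its counit is the localization map $\pi$. The standard slice construction then produces a left adjoint to $L_*$, namely $(D \to X) \mapsto \bigl[\, \M D \to \M X = (X,\M_X) \xrightarrow{\pi} (X,\O_X,\M_X) \,\bigr]$; hence $L_*$ preserves inverse limits. (Alternatively this can be extracted from Theorem~\ref{thm:inverselimits}.) The point worth noting here is that one genuinely must pass to the slice categories: the forgetful functor $\LRS/X \to \LRS$ does \emph{not} preserve all inverse limits — a product in $\LRS/X$ is a fibered product over $X$ — so it is not enough merely to recall that $L$ preserves inverse limits.

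For $\Psi$: I would exhibit a left adjoint $\Lambda : \PRS/(X,\O_X,\M_X) \to (\O_X/\An(X))^{\rm op}$ sending $g : (Z,N) \to (X,\O_X,\M_X)$ to the $\O_X$-algebra $\u{g}_*\O_Z$ (with algebra structure adjoint to $g^\sharp$), where $\u{g}$ denotes the underlying continuous map. To verify $\Lambda \dashv \Psi$, unwind a morphism $Z \to \Psi(A)$ in the slice: it is a $\PRS$ morphism $h : (Z,N) \to (X,A,f_A^*\M_X)$ whose composite with $f_A$ is $g$. The $\RS$ data of such an $h$ is exactly a sheaf map $h^\sharp : \u{g}^{-1}A \to \O_Z$ with $h^\sharp \circ \u{g}^{-1}(f_A) = g^\sharp$, which under $\u{g}^{-1} \dashv \u{g}_*$ is the same as a map of $\O_X$-algebras $A \to \u{g}_*\O_Z$. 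The crux — and the only step that is not formal diagram-chasing — is that the \emph{prime-system} constraint on $h$, namely $(\Spec h_z)(N_z) \subseteq (f_A^*\M_X)_{\u{g}(z)}$, is automatically satisfied: since $h^\sharp_z \circ (f_A)_{\u{g}(z)} = g^\sharp_z$, that constraint reduces to $(g^\sharp_z)^{-1}(\q) = \m_{\u{g}(z)}$ for every $\q \in N_z$, which holds precisely because $g$ is itself a $\PRS$ morphism into $(X,\O_X,\M_X)$. This produces a natural bijection $\Hom_{\PRS/(X,\O_X,\M_X)}(Z,\Psi A) = \Hom_{\O_X/\An(X)}(A, \u{g}_*\O_Z)$, so $\Lambda \dashv \Psi$ and $\Psi$ preserves inverse limits, completing the proof. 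I expect this last verification — recognizing that, over $(X,\O_X,\M_X)$, the prime systems impose no further condition — to be the one substantive point; everything else is bookkeeping with the localization adjunction (and one could, if preferred, use the displayed hom-set identification to check the universal property of the inverse limit directly instead of invoking adjoint functors).
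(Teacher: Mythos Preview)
Your proof is correct. Both you and the paper reduce to the same underlying observation---that over $(X,\O_X,\M_X)$ the pulled-back prime systems impose no new conditions---but you package it differently. The paper argues directly: given a direct limit $A = \varinjlim A_i$ with structure maps $j_i : A_i \to A$, it checks that $(X,A,f^*\M_X)$ is the inverse limit of the $(X,A_i,f_i^*\M_X)$ in $\PRS$ (the only nontrivial point being $j_i^*(f_i^*\M_X) = (f_i j_i)^*\M_X = f^*\M_X$, so the required intersection of prime systems is just $f^*\M_X$), and then invokes Theorem~\ref{thm:inverselimits} to pass to $\LRS$. Your approach instead factors $\Spec_X$ as a composite of two right adjoints on slice categories; this is more structural and yields the adjunction $\Lambda \dashv \Psi$ as a byproduct, at the cost of more bookkeeping. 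The paper's proof is two sentences; yours explains \emph{why} the functor should preserve limits. (One cosmetic point: in your description of the left adjoint to $L_*$, the map $\pi : \M X \to (X,\O_X,\M_X)$ is the identity, since $\M X$ already is $(X,\O_X,\M_X)$; the counit $\epsilon_{\M X}$ is an isomorphism because the unit is.)
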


\begin{proof} Let $i \mapsto (f_i : \O_X \to A_i)$ be a direct limit system in $\O_X / \An(X)$, with direct limit $f : \O_X \to A$, and structure maps $j_i : A_i \to A$.  We claim that $\Spec_X A = (X,A,f^* \M_X)^{\loc}$ is the inverse limit of $i  \mapsto  \Spec_X A_i = (X,A_i,f_i^*\M_X)^{\loc}$.  By Theorem~\ref{thm:inverselimits}, it is enough to show that $(X,A,f^* \M_X)$ is the inverse limit of $i \mapsto (X,A_i,f_i^* \M_X)$ in $\PRS$.  Certainly $(X,A)$ is the inverse limit of $i \mapsto (X,A_i)$ in $\RS$, so we just need to show that $f^* \M_X = \cap_i j_i^*( f_i^* \M_X)$ as prime systems on $(X,A)$ (see the proof of Theorem~\ref{thm:inverselimits}), and this is clear because $j_i f_i = f$, so, in fact, $j_i^*(f_i^* \M_X) = f^* \M_X$ for every $i$. \end{proof}

\begin{lem} Let $f : X \to Y$ be a morphism of locally ringed spaces.  Then for any $\O_Y$ algebra $g : \O_Y \to A$, the diagram $$ \xym{ \Spec_X f^*A \ar[r] \ar[d] & \Spec_Y A \ar[d] \\ X \ar[r] & Y } $$ is cartesian in $\LRS$. \end{lem}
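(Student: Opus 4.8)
The plan is to present the square as the image, under the localization functor $\PRS \to \LRS$, of a square in $\PRS$ that is cartesian there, and then quote Theorem~\ref{thm:inverselimits} (localization preserves inverse limits). Write $g : (Y,A) \to (Y,\O_Y)$ for the $\RS$ morphism attached to the structure map $\O_Y \to A$, and $h : (X,f^*A) \to (X,\O_X)$ for the one attached to $\O_X \to f^*A = f^{-1}A \otimes_{f^{-1}\O_Y} \O_X$. By definition $\Spec_X f^*A = (X,f^*A,h^*\M_X)^{\loc}$ and $\Spec_Y A = (Y,A,g^*\M_Y)^{\loc}$, while $X = (X,\O_X,\M_X)^{\loc}$ and $Y = (Y,\O_Y,\M_Y)^{\loc}$ by Theorem~\ref{thm:localization}.

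First I would construct the natural $\RS$ morphism $f_A : (X,f^*A) \to (Y,A)$ lying over $f$, whose sheaf map $f^{-1}A \to f^{-1}A \otimes_{f^{-1}\O_Y}\O_X = f^*A$ is $a \mapsto a \otimes 1$, and observe that the square
$$\xym{ (X,f^*A) \ar[r]^-{f_A} \ar[d]_-{h} & (Y,A) \ar[d]^-{g} \\ (X,\O_X) \ar[r]^-{f} & (Y,\O_Y) }$$
is cartesian in $\RS$. This is immediate from the description of $\RS$ fibered products recalled in \S\ref{section:fiberedproducts}: the map of spaces underlying $g$ is the identity, so the underlying space of $(X,\O_X)\times^{\RS}_{(Y,\O_Y)}(Y,A)$ is $X \times_Y Y = X$, and its structure sheaf is then $\O_X \otimes_{f^{-1}\O_Y} f^{-1}A = f^*A$.

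Next I would promote this to a cartesian square in $\PRS$ with corners $(X,\O_X,\M_X)$, $(Y,\O_Y,\M_Y)$, $(Y,A,g^*\M_Y)$, and $(X,f^*A,h^*\M_X)$. By the explicit construction of inverse limits in the proof of Theorem~\ref{thm:inverselimits}, the fibered product of $(X,\O_X,\M_X) \xrightarrow{f} (Y,\O_Y,\M_Y) \xleftarrow{g} (Y,A,g^*\M_Y)$ in $\PRS$ has the cartesian $\RS$ square above as its underlying object of $\RS$, carrying the prime system obtained by intersecting the pullbacks, along $h$, along $f_A$, and along $g f_A = f h$, of the prime systems at the three other vertices; so it is enough to check that this intersection equals $h^*\M_X$, i.e.\ that $h^*\M_X \subseteq f_A^*(g^*\M_Y)$ (the pullback of $\M_Y$ along $fh$ then being automatically larger still). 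Using the naturality of inverse image prime systems (Remark~\ref{rem:inverseimage}) together with $g f_A = f h$, we get $f_A^*(g^*\M_Y) = (g f_A)^*\M_Y = (fh)^*\M_Y = h^*(f^*\M_Y)$; and $\M_X \subseteq f^*\M_Y$ because $f$ is an $\LRS$ morphism (Remark~\ref{rem:inverseimage}), so applying the inclusion-preserving operation $h^*$ gives $h^*\M_X \subseteq h^*(f^*\M_Y) = f_A^*(g^*\M_Y)$.

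Finally I would apply the localization functor to this $\PRS$-cartesian square. By Theorem~\ref{thm:inverselimits} the result is cartesian in $\LRS$, and by the identifications above --- using that localization retracts $\M$, so that localizing $f : (X,\O_X,\M_X) \to (Y,\O_Y,\M_Y)$ recovers $f : X \to Y$ --- the corners are $\Spec_X f^*A$, $X$, $Y$, $\Spec_Y A$, the vertical maps are the structure morphisms $\Spec_X f^*A \to X$ and $\Spec_Y A \to Y$, and the square is precisely the one in the statement. The only place any real care is needed is the prime-system bookkeeping in the previous paragraph, namely that the fibered-product prime system on $(X,f^*A)$ is already $h^*\M_X$ so that no further condition is imposed; everything else is the formal behaviour of $\RS$ fibered products and the retraction/adjunction properties of localization, and I do not anticipate a genuine obstacle.
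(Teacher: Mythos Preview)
Your proof is correct and follows exactly the same route as the paper: show that the square of primed ringed spaces with corners $(X,f^*A,h^*\M_X)$, $(Y,A,g^*\M_Y)$, $(X,\O_X,\M_X)$, $(Y,\O_Y,\M_Y)$ is cartesian in $\PRS$, then apply Theorem~\ref{thm:inverselimits}. The paper compresses the entire argument to ``one sees easily that \ldots\ is cartesian in $\PRS$'', whereas you have spelled out the verification on the $\RS$ level and the prime-system bookkeeping (that the fibered-product prime system reduces to $h^*\M_X$ because $\M_X \subseteq f^*\M_Y$); this is precisely the content the paper leaves implicit.
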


\begin{proof}  Note $f^*A := f^{-1}A \otimes_{f^{-1} \O_Y} \O_X$ as usual.  One sees easily that $$ \xym{ (X, f^*A, (f^{-1}g)^* \M_X) \ar[r] \ar[d] & (Y,A,g^* \M_Y) \ar[d] \\ (X,\O_X,\M_X) \ar[r] & (Y,\O_Y,\M_Y) } $$ is cartesian in $\PRS$ so the result follows from Theorem~\ref{thm:inverselimits}. \end{proof}

\begin{example} \label{example:Spec} When $X$ is a scheme, but $A$ is not a coherent $\O_X$ module, $\Spec_X A$ may not be a scheme.  For example, let $B$ be a local ring, $X := \Spec B$, and let $x$ be the unique closed point of $X$.  Let $A := x_*B \in \An(X)$ be the skyscraper sheaf $B$ supported at $x$.  Note $\O_{X,x} = B$ and \be  \Hom_{\An(X)}(\O_X,x_*B) & = & \Hom_{\An}(\O_{X,x}, B), \ee so we have a natural map $\O_X \to A$ in $\An(X)$ whose stalk at $x$ is $\Id : B \to B$.  Then $\Spec_X A = (\{ x \}, A)$ is the punctual space with ``sheaf" of rings $A$, mapping in $\LRS$ to $X$ in the obvious manner.  But $(\{ x \}, A)$ is not a scheme unless $A$ is zero dimensional. 

Here is another related pathology example:  Proceed as above, assuming $B$ is a local \emph{domain} which is not a field and let $K$ be its fraction field.  Let $A := x_* K$, and let $\O_X \to A$ be the unique map whose stalk at $x$ is $B \to K$.  Then $\Spec_X A$ is empty. \end{example}

Suppose $X$ is a scheme, and $A$ is an $\O_X$ algebra such that $\Spec_X A$ is a scheme.  I do not know whether this implies that the structure morphism $\Spec_X A \to X$ is an \emph{affine} morphism of schemes.

\subsection{Relative schemes}  We begin by recalling some definitions.

\begin{defn} \label{defn:fibered} ([SGA1], [Vis 3.1]) Let $F : \C \to \D$ be a functor.  A $\C$ morphism $f : c \to c'$ is called \emph{cartesian} (relative to $F$) iff, for any $\C$ morphism $g : c'' \to c'$ and any $\D$ morphism $h : Fc' \to Fc''$ with $Fg \circ h = Ff$ there is a unique $\C$ morphism $\ov{h} : c \to c''$ with $F \ov{h} = h$ and $f = g \ov{h}$.  The functor $F$ is called a \emph{fibered category} iff, for any $\D$ morphism $f : d \to d'$ and any object $c'$ of $\C$ with $Fc'=d'$, there is a cartesian morphism $\ov{f} : c \to c'$ with $F \ov{f}=f$.  A \emph{morphism} of fibered categories $$(F : \C \to \D) \to (F' : \C' \to D')$$ is a functor $G : \C \to \C'$ satisfying $F'G=F$ and taking cartesian arrows to cartesian arrows.  If $\D$ has a topology (i.e.\ is a site), then a fibered category $F : \C \to \D$ is called a \emph{stack} iff, for any object $d \in D$ and any cover $\{ d_i \to d \}$ of $d$ in $\D$, the category $F^{-1}(d)$ is equivalent to the category $F(\{ d_i \to d \})$ of \emph{descent data} (see [Vis 4.1]). \end{defn}

Every fibered category $F$ admits a morphism of fibered categories, called the \emph{associated stack}, to a stack universal among such morphisms [Gir I.4.1.2].
 
\begin{defn} \label{defn:SchX}  ([Hak V.1]) Let $X$ be a ringed space.  Define a category $\Sch_X^{\rm pre}$ as follows.  Objects of $\Sch_X^{\rm pre}$ are pairs $(U,X_U)$ consisting of an open subset $U \subseteq X$ and a scheme $X_U$ over $\Spec \O_X(U)$.  A morphism $(U,X_U) \to (V,X_V)$ is a pair $(U \to V, X_U \to X_V)$ consisting of an $\Ouv(X)$ morphism $U \to V$ (i.e. $U \subseteq V$) and a morphism of schemes $X_U \to X_V$ making the diagram \bne{mo} & \xym{ X_U \ar[d] \ar[r] & X_V \ar[d] \\ \Spec \O_X(U) \ar[r] & \Spec \O_X(V) } \ene commute in $\Sch$.  The forgetful functor $\Sch_X^{\rm pre} \to \Ouv(X)$ is clearly a fibered category, where a cartesian arrow is a $\Sch_X^{\rm pre}$ morphism $(U \to V, X_U \to X_V)$ making \eqref{mo} \emph{cartesian} in $\Sch$ (equivalently in $\LRS$).  Since $\Ouv(X)$ has a topology, we can form the associated stack $\Sch_X$.  The category of \emph{relative schemes over} $X$ is, by definition, the fiber category $\Sch_X(X)$ of $\Sch_X$ over the terminal object $X$ of $\Ouv(X)$. \end{defn}

(The definition of relative scheme makes sense for a ringed topos $X$ with trivial modifications.)

\subsection{Geometric realization} \label{section:geometricrealization} Now let $X$ be a \emph{locally} ringed space.  Following [Hak V.3], we now define a functor \be F_X : \Sch_X(X) & \to & \LRS / X \ee called the \emph{geometric realization}.  Although a bit abstract, the fastest way to proceed is as follows:

\begin{defn} \label{defn:LRSX} Let $\LRS_X$ be the category whose objects are pairs $(U,X_U)$ consisting of an open subset $U \subseteq X$ and a locally ringed space $X_U$ over $(U,\O_X|U)$, and where a morphism $(U,X_U) \to (V,X_V)$ is a pair $(U \to V, X_U \to X_V)$ consisting of an $\Ouv(X)$ morphism $U \to V$ (i.e. $U \subseteq V$) and an $\LRS$ morphism $X_U \to X_V$ making the diagram \bne{mor} & \xym{ X_U \ar[d] \ar[r] & X_V \ar[d] \\ (U,\O_X|U) \ar@{^(->}[r] & (V,\O_X|V) } \ene commute in $\LRS$.  The forgetful functor $(U,X_U) \mapsto U$ makes $\LRS_X$ a fibered category over $\Ouv(X)$ where a cartesian arrow is a morphism $(U \to V, X_U \to X_V)$ making \eqref{mor} cartesian in $\LRS$.  \end{defn}

In fact the fibered category $\LRS_X \to \Ouv(X)$ is a stack: one can define locally ringed spaces and morphisms thereof over open subsets of $X$ locally.  Using the universal property of stackification, we define $F_X$ to be the morphism of stacks (really, the corresponding morphism on fiber categories over the terminal object $X \in \Ouv(X)$) associated to the morphism of fibered categories \be F_X^{\rm pre} : \Sch_X^{\rm pre} & \to & \LRS_X \\ (U,X_U) & \mapsto & (U, X_U \times_{\Spec \O_X(U)}^{\LRS} (U,\O_X|U)) . \ee  The map $(U,\O_X|U) \to \Spec \O_X(U)$ is the adjunction morphism for the adjoint functors of Proposition~\ref{prop:affine}.  This functor clearly takes cartesian arrows to cartesian arrows.

\begin{rem} Although we loosely follow [Hak V.3.2] in our construction of the geometric realization, our geometric realization functor differs from Hakim's on their common domain of definition. \end{rem}

\subsection{Relatively affine morphisms}  Let $f : X \to Y$ be an $\LRS$ morphism.  Consider the following conditions: \begin{enumerate}[label=RA\theenumi., ref=RA\theenumi]  \item \label{RA11} Locally on $Y$ there is an $\O_Y(Y)$ algebra $A$ and a cartesian diagram $$ \xym{ X \ar[r] \ar[d]_f & \Spec A \ar[d] \\ Y \ar[r] & \Spec \O_Y(Y) } $$ in $\LRS$. \item \label{RA} There is an $\O_Y$ algebra $A$ so that $f$ is isomorphic to $\Spec_Y A$ in $\LRS/Y$.  \item \label{RASpecQCo} Same condition as above, but $A$ is required to be quasi-coherent. \item \label{RA4} For any $g : Z \to Y$ in $\LRS / Y$, the map \be \Hom_{\LRS / Y}(Z,X) & \to & \Hom_{\O_Y / \An(Y)}( f_* \O_X, g_* \O_Z) \\ h & \mapsto & g_* h^{\flat} \ee is bijective. \end{enumerate}

\begin{rem} The condition \eqref{RA11} is equivalent to both of the following conditions: \begin{enumerate}[label=RA1.\theenumi., ref=RA1\theenumi] \item \label{RA12} Locally on $Y$ there is a ring homomorphism $A \to B$ and a cartesian diagram $$ \xym{ X \ar[r] \ar[d]_f & \Spec B \ar[d] \\ Y \ar[r] & \Spec A } $$ in $\LRS$.  \item \label{RA13} Locally on $Y$ there is an affine morphism of schemes $X' \to Y'$ and a cartesian diagram $$ \xym{ X \ar[r] \ar[d]_f & X' \ar[d] \\ Y \ar[r] & Y' } $$ in $\LRS$. \end{enumerate}  The above two conditions are equivalent by definition of an affine morphism of schemes, and one sees the equivalence of \eqref{RA11} and \eqref{RA12} using Proposition~\ref{prop:affine}, which ensures that the map $Y \to \Spec A$ in \eqref{RA11} factors through $Y \to \Spec \O_Y(Y)$, hence \be X & = & Y \times_{\Spec A} \Spec B \\ & = & Y \times_{\Spec \O_Y(Y)} \Spec \O_{Y}(Y) \times_{\Spec A} \Spec B \\ & = & Y \times_{\Spec \O_Y(Y)} \Spec (\O_Y(Y) \otimes_A B).\ee \end{rem}

Each of these conditions has some claim to be the definition of a \emph{relatively affine morphism} in $\LRS$.  With the exception of \eqref{RA}, all of the conditions are equivalent, when $Y$ is a scheme, to $f$ being an affine morphism of schemes in the usual sense.  With the exception of \eqref{RA4}, each condition is closed under base change.  For each possible definition of a relatively affine morphism in $\LRS$, one has a corresponding definition of \emph{relatively schematic morphism}, namely: $f : X \to Y$ in $\LRS$ is relatively schematic iff, locally on $X$, $f$ is relatively affine.  

The notion of ``relatively schematic morphism" obtained from \eqref{RA11} is equivalent to: $f : X \to Y$ is in the essential image of the geometric realization functor $F_Y$.

\subsection{Monoidal spaces}  The setup of localization of ringed spaces works equally well in other settings; for example in the category of \emph{monoidal spaces}.  We will sketch the relevant definitions and results.  For our purposes, a \emph{monoid} is a set $P$ equipped with a commutative, associative binary operation $+$ such that there is an element $0 \in P$ with $0+p=p$ for all $p \in P$.  A morphism of monoids is a map of sets that respects $+$ and takes $0$ to $0$.  An \emph{ideal} of a monoid $P$ is a subset $I \subseteq P$ such that $I+P \subseteq I$.  An ideal $I$ is \emph{prime} iff its complement is a submonoid (in particular, its complement must be non-empty).  A submonoid whose complement is an ideal, necessarily prime, is called a \emph{face}.  For example, the faces of $\NN^2$ are $\{ (0,0) \}$, $\NN \oplus 0$, and $0 \oplus \NN$; the diagonal $\Delta : \NN \into \NN^2$ is a submonoid, but not a face.   

If $S \subseteq P$ is a submonoid, the \emph{localization} of $P$ at $S$ is the monoid $S^{-1}P$ whose elements are equivalence classes $[p,s]$, $p \in P$, $s \in S$ where $[p,s]=[p',s']$ iff there is some $t \in S$ with $t+p+s'=t+p'+s$, and where $[p,s]+[p',s']:=[p+p',s+s']$.  The natural map $P \to S^{-1}P$ given by $p \mapsto [p,0]$ is initial among monoid homomorphisms $h : P \to Q$ with $h(S) \subseteq Q^*$.  The localization of a monoid at a prime ideal is, by definition, the localization at the complementary face.

A \emph{monoidal space} $(X,\M_X)$ is a topological space $X$ equipped with a sheaf of monoids $\M_X$.  Monoidal spaces form a category $\MS$ where a morphism $f=(f, f^\dagger) : (X,\M_X) \to (Y,\M_Y)$ consists of a continuous map $f : X \to Y$ together with a map $f^\dagger : f^{-1} \M_Y \to \M_X$ of sheaves of monoids on $X$. A monoidal space $(X,\M_X)$ is called \emph{local} iff each stalk monoid $\M_{X,x}$ has a unique maximal ideal $\m_x$.  Local monoidal spaces form a category $\LMS$ where a morphism is a map of the underlying monoidal spaces such that each stalk map $f^\dagger_x : \M_{Y,f(x)} \to \M_{X,x}$ is \emph{local} in the sense $(f^\dagger)^{-1} \m_{f(x)} = \m_x$.  A \emph{primed monoidal space} is a monoidal space equipped with a set of primes $M_x$ in each stalk monoid $\M_{X,x}$.  The \emph{localization} of a primed monoidal space is a map of monoidal spaces $(X,\M_X,M)^{\rm loc} \to (X,\M_X)$ from a local monoidal space constructed in an obvious manner analogous to the construction of \S\ref{section:localization} and enjoying a similar universal property.  In particular, we let $\Spec P$ denote the localization of the punctual space with ``sheaf" of monoids $P$ at the terminal prime system.  A \emph{scheme over} $\FF_1$ is a locally monoidal space locally isomorphic to $\Spec P$ for various monoids $P$.  (This is not my terminology.) 

The same ``general nonsense" arguments of this paper allow us to construct inverse limits of local monoidal spaces, to prove that a finite inverse limit of schemes over $\FF_1$, taken in local monoidal spaces, is again a scheme over $\FF_1$, to construct a relative $\Spec$ functor \be \Spec : (\M_X / \Mon(X))^{\rm op} & \to & \LMS / (X,\M_X) \ee for any $(X,\M_X) \in \LMS$ which preserves inverse limits, and to prove that the natural map $$\Hom_{\LMS}((X,\M_X),\Spec P) \to \Hom_{\Mon}(P,\M_X(X))$$ is bijective.

\end{document}